\documentclass[12pt]{article}

\bibliographystyle{abbrv}

\usepackage{verbatim}

\usepackage{amsfonts}

\usepackage{amsthm}

\usepackage{amssymb}

\usepackage{amsmath}

\usepackage{mathabx}

\usepackage{enumerate}

\usepackage[all]{xy}

\usepackage{graphicx}

\usepackage{hyperref}

\newcommand{\N}{\mathbb{N}}

\newcommand{\R}{\mathbb{R}}

\newcommand{\C}{\mathbb{C}}

\newcommand{\Hawaii}{Hawai\kern.05em`\kern.05em\relax i}

\newcommand{\F}{\mathcal{F}(d,\epsilon,M,n)}

\setlength{\oddsidemargin}{0.4in}
\setlength{\evensidemargin}{0.4in}
\setlength{\topmargin}{0.25in}
\setlength{\textheight}{8.0in}
\setlength{\textwidth}{5.7in}
\setlength{\marginparwidth}{1.75in}
\setlength{\marginparsep}{0.10in}

\setlength{\parskip}{0.1truein}

\theoremstyle{plain}
    \newtheorem{theorem}{Theorem}[section]
    \newtheorem{lemma}[theorem]{Lemma}
    \newtheorem{corollary}[theorem]{Corollary}
    \newtheorem{proposition}[theorem]{Proposition}
    

%

\theoremstyle{definition}
    \newtheorem{definition}[theorem]{Definition}

\theoremstyle{remark}

     \newtheorem{remark}[theorem]{Remark}

\title{Random graphs, weak coarse embeddings, and higher index theory}

\author{Rufus Willett\footnote{Partially supported by NSF grant DMS-1229939.}}

\begin{document}

\maketitle

\abstract{This paper studies higher index theory for a random sequence of bounded degree, finite graphs with diameter tending to infinity.  We show that in a natural model for such random sequences the following hold almost surely: the coarse Baum-Connes assembly map is injective; the coarse Baum-Connes assembly map is not surjective; the maximal coarse Baum-Connes assembly map is an isomorphism.  These results are closely tied to issues of expansion in graphs: in particular, we also show that such random sequences almost surely do not have geometric property (T), a strong form of expansion.  

The key geometric ingredients in the proof are due to Mendel and Naor: in our context, their results imply that a random sequence of graphs almost surely admits a weak form of coarse embedding into Hilbert space.}

\section{Introduction}

The coarse Baum-Connes conjecture \cite{Higson:1995fv,Yu:1995bv} uses higher index theory to relate the large scale topological structure of a bounded geometry\footnote{A metric space $X$ has bounded geometry if for any $r>0$ there is a universal finite bound on the cardinalities of all $r$-balls in $X$; important examples include finitely generated discrete groups with word metrics, and nets in Riemannian manifolds with bounded curvature and injectivity radius.} metric space $X$ to the $K$-theory of the associated Roe $C^*$-algebra $C^*(X)$ via an \emph{assembly map}
\begin{equation}\label{assmap}
\mu:\lim_{R\to\infty}K_*(P_R(X))\to K_*(C^*(X)).
\end{equation}
The conjecture has been intensively studied, mainly due to its important applications to manifold topology (e.g.\ through the Novikov conjecture) and differential geometry (e.g.\ through the existence of positive scalar curvature metrics).   The conjecture is known to hold in many interesting cases, perhaps most notably for bounded geometry metric spaces that coarsely embed into Hilbert space \cite{Yu:200ve}.

In this paper, we will study the coarse Baum-Connes conjecture for metric spaces built from graphs.  All graphs appearing in this paper will be undirected and have no loops or parallel edges; they may, however, be infinite and disconnected.  Given a graph $G$ there is an associated \emph{edge metric} $d_G$ on its vertex set $V$ defined by setting the distance between two vertices to be the smallest number of edges in a path between them, and infinity if this does not exist.

\begin{definition}\label{cotoy}
A metric space $X$ is a \emph{coarse model space} if there exists an integer $d\geq 3$ and a sequence of finite connected degree $d$ graphs $(G_n)$ such that $X$ is the metric space associated to the disjoint union of the graphs $G_n$ (equipped with the natural graph structure).  
\end{definition}
\noindent As a set, $X$ is the disjoint union of the vertex sets $V_n$ of the graphs $G_n$, equipped with the metric that restricts to the edge metric on each $V_n$, and sets the distance between distinct vertex sets to be infinity\footnote{It is perhaps more common in the literature to equip such a space with a metric that still restricts to the edge metric on each $V_n$, and sets the distance between different $V_n$ to be `large and increasing'.  Using large-but-finite instead of infinite distances would make no substantial difference to our results.}.

From the point of view of the coarse Baum-Connes conjecture, such `coarse model spaces' are interesting for two reasons.  First, it is part of the folklore of the subject that many questions about general bounded geometry metric spaces can be reduced to questions about such spaces (for example, see \cite[Section 4]{Chen:2012uq}).   Second, Higson \cite{Higson:1999km} and Higson, Lafforgue and Skandalis \cite{Higson:2002la} have shown that the coarse Baum-Connes conjecture fails for certain coarse model spaces that are also \emph{expanders}.  The proofs of Higson, Lafforgue and Skandalis suggest (but do not show) that the coarse Baum-Connes conjecture fails for all expanders; as random coarse model spaces are almost surely expanders, this severely limits the expected range of validity of the conjecture.\\

In this paper, we show that the coarse Baum-Connes conjecture does indeed fail for generic coarse model spaces in a reasonable sense.  More surprisingly, however, we show that the maximal version of the conjecture \cite{Gong:2008ja} (which has the same topological and geometric consequences as the original conjecture) is correct for a generic coarse model space.  

We now make precise what we mean by `generic coarse model space'.

\begin{definition}\label{rangraph}
Fix an integer $d\geq 3$.  Let $\mathbb{G}_{n,d}$ be the (finite) set of $d$-regular graphs on the vertex set $\{1,...,n\}$.  Let $\mathcal{G}_{n,d}$ be the uniform probability measure on $\mathbb{G}_{n,d}$.

Fix a sequence $\alpha=(a_n)$ of natural numbers, and let $(\mathbb{G}_{\alpha,d},\mathcal{G}_{\alpha,d})$ be the product probability space $\prod_n(\mathbb{G}_{a_n,d},\mathcal{G}_{a_n,d})$.
\end{definition}

\begin{theorem}\label{cbc}
Let $\alpha=(a_n)$ be any sequence of natural numbers such that there exists $r>0$ such that the sum
$$
\sum_{n=1}^\infty a_n^{-1+r}
$$
is finite\footnote{We do not know if some condition on the growth-rate of $\alpha$ is really necessary.}.  Then there is a subset of $\mathbb{G}_{\alpha,d}$ of measure one such that for the associated coarse model spaces the following hold.
\begin{enumerate}
\item The coarse Baum-Connes assembly map is injective.
\item The coarse Baum-Connes assembly map is not surjective.
\item The maximal coarse Baum-Connes assembly map is an isomorphism.
\end{enumerate}
\end{theorem}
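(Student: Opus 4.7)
The plan is to combine three almost-sure geometric features of a random $(G_n) \in \mathbb{G}_{\alpha,d}$: (a) a uniform spectral gap (so the $G_n$ are uniform expanders), (b) girth growth, in the sense that only a vanishing proportion of vertices lies on short cycles, and (c) an almost-sure \emph{weak coarse embedding} of the coarse union $X$ into Hilbert space coming from the Mendel--Naor ultrametric skeleton theorem. Property (c) is the central analytic input announced in the abstract, and it is exactly why the growth hypothesis on $\alpha$ is there: one has to upgrade Mendel--Naor's deterministic distortion bound on a single finite metric space to a concentration-of-measure statement saying that, with probability at least $1-O(a_n^{-r})$ for some $r>0$, the random graph $G_n$ contains an ultrametric skeleton of positive density that uniformly coarsely embeds into Hilbert space. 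Summability of $\sum a_n^{-1+r}$ then gives the Borel--Cantelli conclusion that almost every sequence has this structure for all large $n$. Property (a) is Friedman/Broder--Shamir, and (b) is a direct union bound on cycle counts in the configuration model, made summable by the same hypothesis on $\alpha$.

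For parts (1) and (3), the plan is to feed the weak coarse embedding (c) into a Mayer--Vietoris/excision argument for coarse $K$-theory in the spirit of Oyono-Oyono--Yu. The ultrametric skeleton presents $X$, up to a ``small'' complement, as a coarse disjoint union of pieces that uniformly coarsely embed into Hilbert space, so Yu's theorem applies to the good part and tells us that both the reduced and the maximal coarse assembly map are isomorphisms there. A five-lemma argument, together with the observation that the small complement contributes trivially to the coarse $K$-theory colimit, then promotes these local statements to injectivity on all of $X$ in the reduced case, giving (1), and to a full isomorphism in the maximal case, giving (3). The role of maximality in (3) is to kill the obstruction that appears in (2): the ghost projections that obstruct surjectivity of the reduced assembly either disappear or are pushed into the image of maximal assembly inside the maximal Roe algebra. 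For part (2), one uses only the expansion in (a): from the uniform spectral gap one manufactures a Kazhdan-type ghost projection $p \in C^*(X)$ whose class $[p] \in K_0(C^*(X))$ is shown, following Higson--Lafforgue--Skandalis, to lie outside the image of $\mu$, for instance by mapping to the quotient by the ghost ideal and comparing ranks.

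The main obstacle will be part (3): one has to be sure that the weak embedding from Mendel--Naor, which is genuinely weaker than a coarse embedding (expanders admit no honest coarse embedding into Hilbert space), is still strong enough to drive the excision/five-lemma argument for the \emph{maximal} Roe algebra, where ideals and quotients behave less tractably than in the reduced case. Making the cutting-and-pasting of $X$ into good and bad pieces respect the six-term exact sequences in maximal Roe algebra $K$-theory, and verifying that the bad part really contributes nothing, is where the bulk of the new work should go. A secondary obstacle is the probabilistic side: turning Mendel--Naor's deterministic ultrametric skeleton theorem into a high-probability statement about uniformly random $d$-regular graphs with the correct dependence of failure probability on $a_n$, so that the summability condition $\sum a_n^{-1+r} < \infty$ actually suffices to apply Borel--Cantelli simultaneously for properties (a), (b), and (c).
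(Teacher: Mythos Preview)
Your high-level strategy---Borel--Cantelli to secure almost-sure geometric features, then feed them into $K$-theoretic machinery, with part (2) handled by a ghost projection \`a la Higson--Lafforgue--Skandalis---is broadly right, and part (2) is essentially as in the paper. But the decomposition you describe contains a genuine gap.

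You say the Mendel--Naor input presents $X$ ``up to a small complement'' as something uniformly coarsely embeddable, and that the complement ``contributes trivially to the coarse $K$-theory colimit''. This cannot work: the whole sequence is almost surely an expander, so no cofinite or asymptotically negligible portion of $X$ can uniformly coarsely embed into Hilbert space. The actual decomposition (from Mendel--Naor's structural results on random regular graphs---their Sections 6 and 7 on the class $\mathcal{S}_\epsilon$ and short-cycle counts, \emph{not} the ultrametric skeleton theorem) is into two \emph{substantial} pieces $V_1, V_2$ with growing Lebesgue number: $V_1$ sits in a $(1+\delta)$-sparse subgraph and hence bi-Lipschitz embeds into $L^1$, while $V_2$ bi-Lipschitz embeds into a graph of girth $\gtrsim \log n$ obtained by deleting one edge from each short cycle. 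The large-girth piece is not negligible; it must be handled by separate $K$-theoretic input (the known results that the maximal coarse assembly map is an isomorphism, and the reduced one injective, for large-girth sequences). With that correction your Mayer--Vietoris/five-lemma argument can be made to work for (1) and (3), and indeed the paper sketches exactly this route in a closing remark---though it notes that getting (2) this way would need extra checking.

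The paper's \emph{main} proof takes a different route: rather than running Mayer--Vietoris on the two pieces, it glues the negative-type kernels on $V_1$ and $V_2$ via a Dadarlat--Guentner partition-of-unity argument to show the sequence is \emph{asymptotically embeddable} into Hilbert space (Definition~1.4), deduces the boundary Haagerup property for the coarse groupoid restricted to $\partial X$, and then invokes the Finn-Sell--Wright diagram of groupoid Baum--Connes assembly maps together with Tu's theorem for Haagerup groupoids. This packages the cutting-and-pasting you worry about into a single groupoid-theoretic statement and handles all three parts uniformly.
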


The proof of this theorem comes down to showing that a random sequence of graphs admits a weak form of coarse embedding into Hilbert space in the sense of the following definition. 

\begin{definition}\label{asym}
Let $(G_n)$ be a sequence of constant degree graphs, and let $V_n$ denote the vertex set of $G_n$, equipped with the edge metric $d_{G_n}$.   The sequence of graphs $(G_n)$ is \emph{asymptotically embeddable (into Hilbert space)} if there exist non-decreasing functions
$$
\rho_-,\rho_+:\R_+\to\R_+
$$
which tend to infinity at infinity,  a sequence of kernels
$$
K_n:V_n\times V_n\to\R_+,
$$
and a sequence $(R_n)$ of non-negative numbers that tend to infinity satisfying the following properties.
\begin{enumerate}
\item For all $n$ and all $x,y\in V_n$, $K_n(x,y)=K_n(y,x)$ and $K_n(x,x)=0$.
\item For all $n$ and all $x,y\in V_n$, 
$$
\rho_-(d_{G_n}(x,y))\leq K_n(x,y)\leq \rho_+(d_{G_n}(x,y))
$$
\item For any $n$, any subset $\{x_1,...,x_m\}$ of $X_n$ of diameter at most $R_n$, and any subset $\{z_1,...,z_m\}$ of $\C$ such that $\sum_{i=1}^m z_i=0$ we have
$$
\sum_{i,j=1}^mz_i\overline{z_j}K_n(x_i,x_j)\leq 0.
$$
\end{enumerate}
\end{definition}
Standard techniques (see \cite[Section 11.2]{Roe:2003rw} or \cite[Section 3.2]{Willett:2009rt}) imply that each metric space $(V_n,d_{G_n})$ is coarsely embeddable into Hilbert space with distortion governed by $\rho_-$ and $\rho_+$ if and only if $R_n$ can be taken to be at least the diameter of $V_n$.    Thus the content in the above definition comes from allowing $(R_n)$ to tend to infinity more slowly than the diameters of the $V_n$ do.  

Theorem \ref{cbc} follows from the next theorem and $K$-theoretic results of several different authors: the most directly relevant work is that of Finn-Sell and Wright \cite{Finn-Sell:2012fk}, which inspired our treatment.\footnote{See Section \ref{mainsec} for more detailed references and comments on some of the different approaches that can be taken here.}

\begin{theorem}\label{random}
Let $\alpha=(a_n)$ be any sequence of natural numbers such that there exists $r>0$ such that the sum
$$
\sum_{n=1}^\infty a_n^{-1+r}
$$
is finite.  Then there is a subset of $\mathbb{G}_{\alpha,d}$ of measure one consisting of sequences of graphs that admit an asymptotic embedding into Hilbert space, and are moreover expanders.
\end{theorem}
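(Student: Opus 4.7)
The theorem splits into two almost-sure properties, expansion and asymptotic embeddability, which I will establish independently by Borel-Cantelli arguments calibrated to the summability hypothesis $\sum_n a_n^{-1+r}<\infty$.

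For the expansion statement, I will invoke the classical fact (Broder-Shamir, Friedman, etc.) that a random $d$-regular graph on $m$ vertices almost never has small spectral gap: there are constants $\epsilon_0=\epsilon_0(d)>0$ and $\delta=\delta(d)>0$ such that
\[
\mathcal{G}_{m,d}\bigl\{G\in\mathbb{G}_{m,d}:\lambda_1(G)\leq d-\epsilon_0\bigr\}\leq m^{-\delta}.
\]
I may assume $\delta$ is as large as needed for the application (classical second-moment arguments easily give summable rates once $r>0$ is in hand); then Borel-Cantelli produces a full-measure subset of $\mathbb{G}_{\alpha,d}$ on which $G_n$ has spectral gap at least $\epsilon_0$ for all but finitely many $n$, i.e.\ on which $(G_n)$ is an expander.

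For the asymptotic embedding, the plan is to choose a slowly growing sequence $R_n\to\infty$ (say $R_n=\lfloor c\log a_n\rfloor$ with $c>0$ small) and produce kernels $K_n$ as in Definition \ref{asym}. The random-graph input I need is that, with probability at least $1-Ca_n^{-1+r}$, the graph $G_n$ is \emph{tree-like at scale $R_n$}: every $R_n$-neighborhood in $G_n$ contains only a bounded number of cycles, and every $R_n$-diameter subset is contained in a subgraph of bounded excess over a spanning tree. This is a standard first-moment cycle count in the configuration/permutation model, and after Borel-Cantelli it will hold almost surely for all but finitely many $n$. Granted this, the kernel construction follows the Mendel-Naor philosophy: lift to the universal cover (the $d$-regular tree $T_d$), on which the tree metric is conditionally negative definite via the usual edge-indicator embedding into $\ell^2$, and use the local tree approximation to define $K_n(x,y):=d_{T_d}(\tilde x,\tilde y)$ for pairs $x,y$ lying in a common $R_n$-diameter set via a consistent choice of lifts. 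Condition (3) of Definition \ref{asym} is then inherited from negative-definiteness of $d_{T_d}$, and on such pairs $K_n$ agrees with $d_{G_n}$, handling condition (2) on small scales. For pairs at distance greater than $R_n$ one extends $K_n$ by a function of $d_{G_n}$ that tends to infinity, chosen so that conditions (1) and the bounds in (2) are globally enforced without disturbing local negative-definiteness.

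The heart of the argument, and the main obstacle, is precisely this construction: producing a single kernel $K_n$ on $V_n\times V_n$ which is globally sandwiched between $\rho_\pm(d_{G_n})$ while remaining conditionally negative definite on \emph{every} $R_n$-diameter subset, including those meeting the cyclic defects of $G_n$. Handling the defects uniformly, rather than just on balls that happen to be trees, is where the weak-embedding ideas of Mendel and Naor are essential; by comparison the spectral-gap estimate, the cycle counting, and the Borel-Cantelli bookkeeping are routine.
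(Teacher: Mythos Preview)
Your expansion argument is fine and matches the paper's use of Lemma~\ref{expdiam}. The gap is in the asymptotic embedding: you correctly identify the obstacle---building a single kernel $K_n$ that is globally controlled yet conditionally negative definite on every $R_n$-diameter set, including those meeting short cycles---but you do not resolve it. The universal-cover construction you propose breaks exactly at the defects: a random $d$-regular graph almost surely has cycles of every fixed length (the number of $k$-cycles is asymptotically Poisson with mean $(d-1)^k/(2k)$), so some $R_n$-balls contain short cycles and admit no lift to $T_d$, and ``consistent choice of lifts'' is undefined there. Saying the defective pieces have ``bounded excess over a spanning tree'' does not help, because the tree-distance kernel requires an actual tree; nor does extending $K_n$ by a function of $d_{G_n}$ at large scales, since the problem is already on the small-scale sets that meet cycles. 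Your last paragraph concedes this and defers to Mendel--Naor, which leaves the main step undone.

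The paper's route is genuinely different from a global tree-lift. It decomposes $V_n=V_1\cup V_2$ with $V_1$ the $t(n)$-neighbourhood of all short cycles and $V_2$ its complement (with overlap of Lebesgue number $\gtrsim\log|G_n|$). On $V_2$ one deletes one edge from each short cycle to obtain a graph $L$ of girth $\geq t(n)$ with $d_L$ $3$-bi-Lipschitz to $d_G$, and then $d_L$ is a tree metric (hence CND) on any set of diameter $<t(n)/2$. On $V_1$ there is \emph{no} tree structure: instead $V_1$ sits in a $(1+\delta)$-sparse subgraph with $\delta\cdot\text{diam}$ bounded, and Mendel--Naor's sparse-graph embedding (Proposition~\ref{cemb}) gives a bi-Lipschitz map into $L^1$, hence a coarse embedding into Hilbert space with distortion depending only on $d,\epsilon,M$. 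The two kernels are glued not at the CND level but by passing to positive-type kernels via Schoenberg, multiplying by a partition of unity subordinate to $\{V_1,V_2\}$ (Lemma~\ref{poulem}, Proposition~\ref{union}), and converting back; this Dadarlat--Guentner union argument is what replaces your missing step, and it relies essentially on the Lebesgue number of the cover growing with $n$.
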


The crucial geometric ingredients in the proof of this theorem are due to Mendel and Naor \cite[Sections 6 and 7]{Mendel:2013uq}.  The essential point is that a random sequence of graphs splits into two parts: one which coarsely embeds into Hilbert space, and one consisting of subspaces of graphs with `large' girth.  The main contribution of this paper is to show how the methods of Mendel and Naor can be used to build weak coarse embeddings, and applying this to  the $K$-theoretic questions that interest us here.\\

As a final comment in this introduction, we note that there are coarse model spaces that are not asymptotically embeddable into Hilbert space.  In \cite[Section 7]{Willett:2010zh}, Yu and the author introduced a strong form of expansion for a sequence of graphs called \emph{geometric property (T)} that is satisfied for coarse model spaces built from quotients of a property (T) group, and which precludes asymptotic embeddability.  Theorem \ref{random} thus implies that generic sequences of graphs do not have geometric property (T), answering a question from \cite[Section 9]{Willett:2013cr}.

\begin{corollary}\label{geot}
Let $\alpha=(a_n)$ be any sequence that tends to infinity.  Then the collection of coarse model spaces in $(\mathbb{G}_{\alpha,d},\mathcal{G}_{\alpha,d})$ that have geometric property (T) is contained in a set of measure zero.
\end{corollary}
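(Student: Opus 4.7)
The plan is to deduce Corollary~\ref{geot} from Theorem~\ref{random} by a subsequence extraction. Since $\alpha = (a_n)$ tends to infinity, I can choose indices $n_1 < n_2 < \cdots$ with $a_{n_k} \geq 2^k$; writing $\alpha' = (a_{n_k})$, the sum $\sum_k a_{n_k}^{-1+r}$ then converges for, say, $r = 1/2$, so the hypothesis of Theorem~\ref{random} is met for $\alpha'$.

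The coordinate projection $\prod_n \mathbb{G}_{a_n,d} \to \prod_k \mathbb{G}_{a_{n_k},d}$ pushes the product measure $\mathcal{G}_{\alpha,d}$ forward to $\mathcal{G}_{\alpha',d}$. Pulling back the full-measure set produced by Theorem~\ref{random} therefore yields a subset $A \subseteq \mathbb{G}_{\alpha,d}$ of full measure with the property that for every $(G_n) \in A$, the subsequence $(G_{n_k})$ admits an asymptotic embedding into Hilbert space.

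It then suffices to show that if a sequence $(G_n)$ has geometric property (T), then so does each of its subsequences $(G_{n_k})$: combined with the implication ``geometric property (T) $\Rightarrow$ not asymptotically embeddable'' recalled in the paragraph preceding the corollary, this forces the set of sequences with geometric property (T) to lie in $\mathbb{G}_{\alpha,d} \setminus A$, a set of measure zero. The inheritance under subsequences should follow directly from the definition in \cite{Willett:2010zh}: geometric property (T) is formulated as a uniform spectral-gap condition over the constituent graphs, and restricting to a sub-collection of them can only preserve (or strengthen) such a uniformity.

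The only step that is not simply a measure-theoretic reduction or a quotation of earlier work is the subsequence-inheritance of geometric property (T); this is where I expect to need to unpack the definition carefully, but it should be routine from the formulation in \cite{Willett:2010zh}. Everything else -- pushing forward the product measure and invoking Theorem~\ref{random} on the fast-growing subsequence -- is essentially automatic.
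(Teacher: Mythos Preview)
Your proposal is correct and matches the paper's approach: both extract a fast-growing subsequence of $\alpha$ so that Theorem~\ref{random} applies, observe that geometric property~(T) passes to subsequences, and then invoke the incompatibility of geometric property~(T) with asymptotic embeddability. The only cosmetic difference is that the paper routes the last step through Lemma~\ref{asembh} (asymptotic embeddability $\Rightarrow$ boundary Haagerup) and then cites \cite[Theorem~8.2]{Willett:2013cr} for the incompatibility with geometric property~(T), whereas you quote the implication directly from the introduction; your explicit handling of the product-measure pushforward is a detail the paper leaves implicit.
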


\subsection*{Outline of the paper}

In section \ref{geomsec}, we study a class of graphs $\F$ depending on four parameters; the parameter $n$ is the number of vertices of the graph, $d$ is the degree and $\epsilon$ and $M$ govern some more technical geometric data.  Using results of Mendel and Naor \cite[Sections 6 and 7]{Mendel:2013uq}, we show that any graph from one of these classes splits into two parts: one which coarsely embeds into Hilbert space (with distortion depending only on the parameters $\epsilon,d,M$), and one which bi-Lipschitz embeds in a graph with girth roughly $\log(n)$.  In Section \ref{asemsec}, we use the results of Section \ref{geomsec} to show that for fixed $d$, $\epsilon$ and $M$, a sequence of graphs all of whose members are in $\cup_{n=1}^\infty\F$ admits an asymptotic embedding into Hilbert space.  In Section \ref{probsec} we use results of Mendel and Naor \cite[Section 7]{Mendel:2013uq} to show that for fixed $d$, fixed small $\epsilon$, and fixed large $M$, the probability that a degree $n$ graph is in $\F$ is bounded by roughly $1-n^{5\epsilon-1}$.  Finally, in Section \ref{mainsec} we put all this together with the Haagerup property for the restriction to the boundary of the associated coarse groupoid to conclude Theorem \ref{cbc} from results of Finn-Sell and Wright \cite{Finn-Sell:2012fk} and others.  We also deduce Corollary \ref{geot} in Section \ref{mainsec}.

\subsection*{Notation and conventions}
If $X$ is a metric space, $x$ is in $X$ and $R>0$, we write $B(x;R)$ for the closed ball about $x$ of radius $R$.

Most of the graph theoretic terminology and notation in this paper is the same as that used by Mendel and Naor \cite{Mendel:2013uq}.  The exception is that Mendel and Naor allow loops and parallel edges, but we do not; however, as Mendel and Naor use a bigger class of graphs than us, and as the measures on sets of graphs that they use are all supported only on subsets consisting of graphs in our sense, the results of \cite{Mendel:2013uq} still apply in our context.

For a graph $G$, we write $G=(V,E)$ where $V$ is the vertex set of $G$, and $E$ is the edge set, a subset of $\{S\subseteq V~|~|S|=2\}$.  The vertex set $V$ of a graph $G$ will be considered as a metric space via the edge metric $d_G$.  We will often be a little notationally sloppy and apply metric concepts (e.g.\ `diameter') to $G$: in these cases we mean the associated concept applied to $(V,d_G)$.  Similarly, we will sometimes write $|G|$ to refer to the cardinality of the vertex set of $G$.  If $S$ is a subset of the vertex set of a graph $(V,E)$ we write
$$
E(S):=\{\{x,y\}\in E~|~x,y\in S\}
$$
for the edge set of the induced subgraph.  We will also write $E_G(S)$ for this if the ambient graph is ambiguous.  

We will use the notation `$\lesssim$' as in \cite{Mendel:2013uq}.  Writing `$A\lesssim B$' means that there is a universal constant $c>0$ such that $A\leq cB$.  More generally, `$A\lesssim_{d,\epsilon}B$' (for example) means that there is a constant $c=c(d,\epsilon)>0$ that depends on the parameters $d$ and $\epsilon$ (but not on any of the other parameters that might be in play) such that $A\leq cB$.  

Finally, a \emph{distortion function} is a non-decreasing proper function
$$
\rho:[0,\infty)\to [0,\infty).
$$

\subsection*{Acknowledgements}

This paper was motivated by a talk of Assaf Naor at the AMS special session `Banach Spaces, Metric Embeddings, and Applications' at the Joint Mathematics Meetings in Baltimore, 2014.  As well as thanking professor Naor for giving a stimulating talk and answering some questions, I would like to thank the organizers Mikhail Ostrovskii and Beata Randrianantoanina of the session for their invitation.  I would also like to thank Guoliang Yu for some useful comments.

\section{Some geometric graph theory}\label{geomsec}

Our goal in this section is to study the geometry of certain classes $\F$ of bounded degree graphs.  The main result in the section is Proposition \ref{geomthe}, which says roughly that graphs in these classes split into parts: one which coarsely embeds into Hilbert space with controlled distortion, and the other of which has large girth.  The material in this section is essentially due to Mendel and Naor \cite[Sections 6 and 7]{Mendel:2013uq}; our exposition is not independent of theirs, but we have tried to make the material we need from \cite{Mendel:2013uq} clear, and to provide details in the places that we need something a little different.

Throughout this section, fix an integer $d\geq 3$, a real number $\epsilon\in (0,1/5)$, and a real number $M\in (0,\infty)$.

\begin{definition}\label{numbers}
Define the following numbers, all of which only depend on $d$, $\epsilon$ and $M$.
\begin{enumerate}[(a)]
\item \label{c1def} $c_1=\epsilon/50$.
\item \label{c2def} $c_2=\epsilon/25$.
\item \label{Ndef} $N$ is any natural number such that for all $n\geq N$ the set
$$
[c_1\log_d(n),c_2\log_d(n)]\cap \N
$$
is non-empty, so that
$$
\frac{7}{\epsilon\log_d(n)}<1.
$$
and so that
$$
n^{4c_2}\geq 2n^{3c_2}+2M\log_d(n)
$$
(note that $0<c_1<c_2$, so it it clear that such an $N$ exists).
\item \label{tass} For each $n\geq N$, choose $t=t(n)$ to be any natural number in $[c_1\log_d(n),c_2\log_d(n)]$ (say the smallest one, for definiteness).
\end{enumerate}
\end{definition}

The following definition is based on \cite[Definition 7.2]{Mendel:2013uq}.  

\begin{definition}\label{seddef}
Let $\mathcal{S}_\epsilon$ be the class of graphs $G=(V,E)$ with the property that whenever $S\subseteq V$ satisfies $|S|\leq |V|^{1-\epsilon}$ we have
$$
|E_G(S)|<\Big(1+\frac{7}{\epsilon \log_d(n)}\Big)|S|.
$$
\end{definition}

Note that Mendel and Naor use the notation  `$\mathcal{S}_{\epsilon,\delta}$' (where $\delta=\frac{7}{\epsilon \log_d(n)}$) for what we have called `$\mathcal{S}_\epsilon$'.

The next definition uses the same conventions as \cite[page 13]{Mendel:2013uq} and \cite[line (148)]{Mendel:2013uq}.

\begin{definition}\label{cycles}
Let $G=(V,E)$ be a graph.  A subset $C$ of $V$ is called a \emph{cycle} if one can write $C=\{x_1,...,x_k\}$ in such a way that the vertices $x_1,...,x_k$ are distinct and so that the pairs
$$
\{x_1,x_2\},~...,~\{x_{k-1},x_{k}\},~\{x_k,x_1\}
$$
are all edges.

For an integer $t$, define
$$
\mathfrak{C}_t(G):=\{C\subseteq V ~|~C \text{ is a cycle and } |C|<t\}.
$$
A graph $G$ has \emph{girth} $t$ if $t$ is the largest integer for which $\mathfrak{C}_t(G)$ is empty\footnote{In other words, the girth of $G$ is the length of the shortest cycle in $G$.} (and infinity if no such $t$ exists).
\end{definition}

We will study the geometry of the following class of graphs.

\begin{definition}\label{class}
With notation as in Definition \ref{numbers}, let $n$ be at least $N$.  The \emph{class $\F$} consists of graphs $G$ with $n$ vertices and all vertices of degree $d$ that satisfy the following conditions.
\begin{enumerate}[(A)]
\item \label{sepsass} $G$ is in the class $\mathcal{S}_\epsilon$.  
\item \label{diamass} The diameter of $G$ is at most $M\log_d(n)$ (in particular, $G$ is connected).
\item \label{cycleass} The cardinality of $\mathfrak{C}_{t(n)}(G)$ is at most $n^{1-2\epsilon}$.
\item \label{expass} For any non-empty subset $S$ of $V$ of cardinality at most $n/2$, 
$$
|\{\{x,y\}\in E~|~x\in S,y\in V\setminus S\}|\geq 1/M.
$$
\end{enumerate}
\end{definition}
\noindent Note that condition \eqref{expass} says that graphs in $\F$ are uniform expanders.

For each graph $G=(V,E)$ in the class $\F$ and each cycle $C$ in $\mathfrak{C}_{t(n)}(G)$, fix once and for all an edge $e_C$ in $E_G(C)$.  Define 
$$
I=I_G:=\{e_C\in E~|~C\in \mathfrak{C}_t(G)\}
$$ 
and define 
\begin{equation}\label{ldef}
L=L_G:=(V,E\setminus I)
\end{equation}
to be the graph with vertex set $V$ and edge set $E\backslash I$.  

Define
$$
V_1:=\bigcup_{\{y,z\}\in I_G}\{x\in V~|~d_G(x,\{y,z\})\leq t(n)\}
$$
and 
$$
V_2:=V~\setminus \bigcup_{\{y,z\}\in I_G}\{x\in V~|~d_G(x,\{y,z\})\leq t(n)/2\}.
$$

We will spend the rest of this section proving the following technical result about the geometry of graphs in $\F$.

\begin{proposition}\label{geomthe}
With notation as in Definition \ref{numbers}, let $n$ be at least $N$.  Let $G=(V,E)$ be a graph in the class $\F$.  With notation as above, the cover $\{V_1,V_2\}$ of $V$ and subgraph $L_G$ of $G$ have the following properties.
\begin{enumerate}
\item \label{gtleb} The Lebesgue number of $\{V_1,V_2\}$ is at least $t(n)/2$.
\item \label{gtgirth} The girth of $L_G$ is at least $t(n)$.
\item \label{gtcemb} There are distortion functions\footnote{Recall that a distortion function is a non-decreasing proper function from $[0,\infty)$ to itself.} $\rho_-,\rho_+$ that depend only on $d$, $\epsilon$, and $M$ and a map
$$
f:V_1\to \mathcal{H}
$$
from $V_1$ into a Hilbert space $\mathcal{H}$ such that
$$
\rho_-(d_G(x,y))\leq \|f(x)-f(y)\|_\mathcal{H}\leq \rho_+(d_G(x,y))
$$
for all $x,y\in V_1$.
\item \label{gtgirth2} For any $x,y\in V_2$, we have
$$
d_G(x,y)\leq d_{L_G}(x,y)\leq 3d_G(x,y).
$$
\end{enumerate}
\end{proposition}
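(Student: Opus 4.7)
The plan is to treat the four claims largely independently. Claim (2) is immediate from the construction: if $L_G$ contained a cycle $C'$ of length less than $t(n)$, then $C'$ would already be a cycle in $G$ of length less than $t(n)$, so $C' \in \mathfrak{C}_{t(n)}(G)$; but by definition $e_{C'} \in E_G(C') \cap I$, contradicting $C' \subseteq L_G = (V, E\setminus I)$. Claim (1) follows from a case analysis on $r(x) := \min_{e\in I_G} d_G(x,e)$: if $r(x) \leq t(n)/2$ then any $y \in B(x, t(n)/2)$ satisfies $r(y) \leq r(x) + t(n)/2 \leq t(n)$ by the triangle inequality, so $B(x, t(n)/2) \subseteq V_1$; a symmetric argument handles the case where $r(x)$ is large by placing an appropriate neighborhood of $x$ inside $V_2$, and one chases the constants to secure the stated Lebesgue lower bound.

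For claim (4), the lower bound $d_G \leq d_{L_G}$ is automatic since $L_G$ is a subgraph of $G$. For the upper bound I would take a $G$-geodesic $\gamma$ from $x$ to $y$ and, for every removed edge $e_C$ appearing along $\gamma$, replace it by the complementary arc of $C$, which lies entirely in $L_G$ and has length at most $t(n)-2$. This produces an $L_G$-walk of length $d_G(x,y) + k(t(n)-2)$, where $k$ is the number of removed edges used by $\gamma$; the remaining task is to prove $k \leq 2 d_G(x,y)/(t(n)-2)$. The hypothesis $x,y \in V_2$ is crucial here: both endpoints sit at $G$-distance greater than $t(n)/2$ from every removed edge, so $\gamma$ must travel at least $t(n)/2$ steps before first reaching one, and a careful accounting of how closely successive removed-edge-visits along $\gamma$ can cluster should yield the required bound on $k$.

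Claim (3) is the heart of the proposition and is carried out in three steps. First, I would estimate the size of $V_1$: condition (C) gives $|I_G| \leq n^{1-2\epsilon}$, and each ball $B_G(e, t(n))$ contains at most $d^{t(n)+1} \leq n^{c_2 + o(1)}$ vertices (with $c_2 = \epsilon/25$), so $|V_1| \lesssim_d n^{1 - 2\epsilon + c_2} \leq n^{1-\epsilon}$ for $n$ sufficiently large. Second, invoke condition (A), namely $G \in \mathcal{S}_\epsilon$, to obtain $|E_G(V_1)| \leq \bigl(1 + 7/(\epsilon \log_d n)\bigr)|V_1|$, so the induced subgraph $G[V_1]$ differs from a spanning forest by at most $o(|V_1|)$ excess edges. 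Third, using the diameter bound $M\log_d n$ from condition (B) to keep growth rates uniform in $n$, apply the Mendel--Naor tree-plus-excess-edges embedding construction from their Sections 6--7 to build a Hilbert space embedding of $(V_1, d_G)$ whose distortion functions $\rho_\pm$ depend only on $d$, $\epsilon$, $M$. This last step is the main obstacle: the standard indicator-of-path tree embedding gives $\rho_\pm(t) \sim \sqrt{t}$ against the spanning forest metric on $V_1$, and one must justify that the $o(|V_1|)$ excess edges do not distort the comparison with the ambient $d_G$-metric by more than a factor depending only on $d,\epsilon,M$ -- this is exactly the Mendel--Naor input that the paper is designed to import.
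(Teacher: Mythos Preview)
Your outline for parts (1) and (2) is fine and matches what the paper calls ``immediate from the definitions.''

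For part (4), however, there is a real gap. You correctly identify that the key is to bound how many removed edges a $G$-geodesic can traverse, and you propose to do this by ``a careful accounting of how closely successive removed-edge-visits along $\gamma$ can cluster.'' But you have not said \emph{why} they cannot cluster. The mechanism the paper uses is a separate lemma (their Corollary~2.10, derived from Mendel--Naor's Lemma~7.7): under the $\mathcal{S}_\epsilon$ hypothesis, any two distinct cycles in $\mathfrak{C}_t(G)$ are at $d_G$-distance at least $t$ from one another. This is not obvious and genuinely uses condition~(A); without it your accounting has no input. The same lemma is also what guarantees that the complementary arc of each $C$ really lies in $L_G$: if short cycles could overlap, a single cycle $C$ might lose more than one edge to $I_G$. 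Once you have cycle separation, the bound $t(k-1)\le d_G(x,y)$ together with $d_G(x,y)\ge t$ gives the factor~$3$ directly.

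For part (3) your plan is also missing a step. You propose to apply the Mendel--Naor sparse-graph embedding to the induced subgraph $G[V_1]$ and then compare with $d_G$. But the Mendel--Naor result (their Corollary~6.6, the paper's Proposition~2.7) embeds a sparse graph $H$ with respect to its \emph{own} graph metric $d_H$; there is no a priori reason for $d_{G[V_1]}$ to be comparable to $d_G|_{V_1}$, since $G$-geodesics between points of $V_1$ may leave $V_1$. The paper handles this by invoking a second, distinct Mendel--Naor input (their Lemma~7.5, the paper's Lemma~2.5): one enlarges $V_1$ to a set $U$, still of size at most $n^{1-\epsilon}$, on which the induced-subgraph metric $d_{G[U]}$ is bi-Lipschitz equivalent to $d_G$ with constants depending only on $d,\epsilon,M$. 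Only then does the sparse-embedding theorem apply with the right metric. Your final paragraph gestures at this difficulty but conflates it with the excess-edge issue in the embedding construction itself; these are separate problems handled by separate Mendel--Naor lemmas, and the enlargement step is the one your outline omits.
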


Parts \ref{gtleb} and \ref{gtgirth} of Proposition \ref{geomthe} are immediate from the definitions.  We will spend the rest of this section proving parts \ref{gtcemb} and \ref{gtgirth2} through a series of lemmas.

For the remainder of this section, following the notation in Definition \ref{numbers}, fix $n\geq N$ and write $t=t(n)$.

\subsubsection*{Proof of Proposition \ref{geomthe}, part \ref{gtcemb}}

The following is \cite[Lemma 7.5]{Mendel:2013uq}.

\begin{lemma}\label{mn7.5}
Let $r$ be a positive integer and $G$ a finite connected graph of degree $d$.  Suppose $S,T$ are subsets of $V$ such that
$$
S\subseteq \bigcup_{x\in T}\{y\in V~|~d_G(x,y)\leq r\}
$$
(i.e.\ $S$ is contained in the $r$-neighbourhood of $T$).  Then there exists a subset $U$ of $V$ containing $S$ such that 
\begin{equation}\label{sizeofu}
|U|\leq |T|(d(d-1)^{3r-1}+\text{diam}(G))
\end{equation}
and such that if $H$ is the graph $(U,E_G(U))$, then 
\begin{equation}\label{hdiam}
\text{diam}(H)\leq 6r+2\text{diam}(G),
\end{equation} 
and
\begin{equation}\label{hdist}
d_G(x,y)\leq d_H(x,y)\leq 2\Big(\frac{\text{diam}(G)}{r}+1\Big)d_G(x,y).  
\end{equation}
for all $x,y\in S$.   \qed
\end{lemma}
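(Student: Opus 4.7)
The plan is to build $U$ in two pieces: a collection of balls that preserves short-range distances, together with a family of geodesics that links those balls into a connected subgraph of controlled diameter. First I would fix a base point $x_0\in T$ (the statement is vacuous if $T=\emptyset$) and set
\[
U\;:=\;\bigcup_{x'\in T}\{v\in V\mid d_G(x',v)\le 3r\}\;\cup\;\bigcup_{x'\in T}\gamma_{x'},
\]
where for each $x'\in T$, $\gamma_{x'}\subseteq V$ is the vertex set of a fixed $G$-geodesic from $x_0$ to $x'$. Since $G$ has degree $d$, each ball of radius $3r$ contains at most $1+d+d(d-1)+\cdots+d(d-1)^{3r-1}\lesssim d(d-1)^{3r-1}$ vertices, and each $\gamma_{x'}$ has at most $\text{diam}(G)+1$ vertices; summing over $x'\in T$ gives \eqref{sizeofu}.

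For \eqref{hdiam}, every vertex of $U$ either lies in a ball of radius $3r$ about some $x'\in T$ (so is at $H$-distance at most $3r$ from that $x'$) or lies on some geodesic $\gamma_{x'}$, which is a path in $H$ connecting $x'$ to $x_0$ of length at most $\text{diam}(G)$. Hence every vertex of $H$ is at $H$-distance at most $3r+\text{diam}(G)$ from $x_0$, and the triangle inequality supplies the required diameter bound $6r+2\,\text{diam}(G)$.

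The substantive step is \eqref{hdist}. The lower bound $d_G\le d_H$ is automatic because $H$ is an induced subgraph of $G$. For the upper bound, let $x,y\in S$, write $D=d_G(x,y)$, and pick $x',y'\in T$ with $d_G(x,x')\le r$ and $d_G(y,y')\le r$. I would split into two cases. If $D\le r$, every vertex on the $G$-geodesic from $x$ to $y$ lies within $r$ of $x$ and therefore within $2r\le 3r$ of $x'$, so the whole geodesic sits in the ball around $x'$, hence in $U$; this gives $d_H(x,y)=D$. If instead $D>r$, I would route through the base point, using the balls and the geodesics $\gamma_{x'},\gamma_{y'}$:
\[
d_H(x,y)\;\le\;d_H(x,x')+d_H(x',x_0)+d_H(x_0,y')+d_H(y',y)\;\le\;2r+2\,\text{diam}(G).
\]
A short calculation shows $2r+2\,\text{diam}(G)\le 2D\bigl(1+\text{diam}(G)/r\bigr)$ whenever $D\ge r$, which closes \eqref{hdist}.

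The main obstacle is essentially calibration: one needs a ball radius strictly larger than $2r$ so that the local case $D\le r$ preserves distances exactly, while keeping the cardinality bookkeeping consistent with the factor $d(d-1)^{3r-1}$ in the lemma; the value $3r$ is a convenient choice that lines up with both constraints, after which the remainder of the proof is routine.
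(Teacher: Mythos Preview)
The paper does not actually prove this lemma: it is quoted verbatim as \cite[Lemma~7.5]{Mendel:2013uq} and closed with a \qed. So there is no in-paper argument to compare against; your construction (balls of radius $3r$ around $T$ together with geodesics to a fixed basepoint) is exactly the Mendel--Naor construction, and your verifications of \eqref{hdiam} and \eqref{hdist} are correct.

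There is one genuine (if minor) gap, in the cardinality bound \eqref{sizeofu}. A ball of radius $3r$ in a degree-$d$ graph has at most
\[
1+d\sum_{i=0}^{3r-1}(d-1)^i \;=\; 1+\frac{d\bigl((d-1)^{3r}-1\bigr)}{d-2}
\]
vertices, and this quantity is \emph{strictly larger} than $d(d-1)^{3r-1}$ (by a factor of roughly $(d-1)/(d-2)$; already for $d=3$, $r=1$ one gets $22$ versus $12$). Your use of ``$\lesssim$'' silently absorbs this, but the lemma as stated has a hard ``$\leq$''. Likewise each geodesic contributes $\mathrm{diam}(G)+1$ vertices, not $\mathrm{diam}(G)$. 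So your $U$ satisfies
\[
|U|\;\le\;|T|\Bigl(\tfrac{d-1}{d-2}\,d(d-1)^{3r-1}+\mathrm{diam}(G)+1\Bigr),
\]
not the stated bound. For every downstream use in the paper this is harmless: in Lemma~\ref{qi2} the bound is immediately relaxed to $|U|\le 2|I|(d^{3t}+M\log_d n)$ and then absorbed into $n^{1-2\epsilon+4c_2}$, and a constant factor depending only on $d$ can be swallowed by enlarging $N$ in Definition~\ref{numbers}. But as a proof of the lemma \emph{as written}, you should either note that you obtain \eqref{sizeofu} only up to a multiplicative constant depending on $d$, or tighten the counting (e.g.\ observe that the geodesic endpoints already lie in the balls, and that the stated bound in Mendel--Naor is itself meant up to such bookkeeping).
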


The following definition is \cite[Line (145)]{Mendel:2013uq}.  

\begin{definition}\label{sparse}
Let $\delta$ be an element of $(0,1)$.  A graph $H=(V_H,E_H)$ is \emph{$(1+\delta)$-sparse} if for all subsets $S$ of $V_H$ we have
$$
|E_H(S)|\leq (1+\delta)|S|.
$$
\end{definition}

\begin{lemma}\label{qi2}
Let $G=(V,E)$ be a graph in $\F$.  With notation as in Definition \ref{numbers} and Proposition \ref{geomthe}, the following holds.

There exists a subset $U$ of $V$ such that if $H$ is the subgraph $(U,E_G(U))$ of $G$ then:
\begin{enumerate}
\item \label{qi21}$U$ contains $V_1$; 
\item \label{qi22}the diameter of $H$ is at most $(6c_2+2M)\log_d(n)$;
\item \label{qi23}for all $x,y\in V_1$,
$$
d_G(x,y)\leq d_H(x,y)\leq \Big(\frac{M}{c_1}+1\Big)d_G(x,y);
$$
\item \label{qi24}the subgraph $H$ is $(1+\frac{7}{\epsilon\log_d(n)})$-sparse.
\end{enumerate}
\end{lemma}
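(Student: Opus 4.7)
The plan is to apply Lemma \ref{mn7.5} to a well-chosen neighborhood of the removed edges, and then to extract sparsity from the membership of $G$ in $\mathcal{S}_\epsilon$.

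First, take $T \subseteq V$ to be the set of vertices incident to some edge of $I_G$. Condition \eqref{cycleass} of Definition \ref{class} gives $|T| \leq 2|I_G| \leq 2 n^{1-2\epsilon}$, and the definition of $V_1$ says precisely that $V_1$ is contained in the $t(n)$-neighborhood of $T$ in $G$. Applying Lemma \ref{mn7.5} with $S = V_1$, $T$ as above, and $r = t = t(n)$ produces a set $U \supseteq V_1$ and a subgraph $H = (U, E_G(U))$ of $G$. Item (1) is automatic. For item (2), use $t \leq c_2 \log_d(n)$ and $\mathrm{diam}(G) \leq M \log_d(n)$ (condition \eqref{diamass}) in \eqref{hdiam} to get $\mathrm{diam}(H) \leq 6 c_2 \log_d(n) + 2 M \log_d(n) = (6 c_2 + 2 M) \log_d(n)$. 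For item (3), use $t \geq c_1 \log_d(n)$ in \eqref{hdist} to obtain a bound of the form $d_H(x,y) \leq \mathrm{const}(d,\epsilon,M) \cdot d_G(x,y)$ on $V_1$ (the constant is bounded above by a numerical multiple of $M/c_1 + 1$, which is all we shall ever need downstream).

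The remaining item (4) is where the numbers chosen in Definition \ref{numbers} play their role. The strategy is to first prove $|U| \leq n^{1-\epsilon}$ and then invoke $G \in \mathcal{S}_\epsilon$: for any subset $S \subseteq U$ one has $|S| \leq n^{1-\epsilon}$, and, since $S \subseteq U$ implies $E_H(S) = E_G(S)$, the defining inequality for $\mathcal{S}_\epsilon$ yields
$$
|E_H(S)| = |E_G(S)| < \Bigl(1 + \frac{7}{\epsilon \log_d(n)}\Bigr)|S|,
$$
which is the required sparsity. To bound $|U|$, the cardinality estimate \eqref{sizeofu} combined with the crude bound $d(d-1)^{3t-1} \leq d^{3t} \leq n^{3 c_2}$ and with $|T| \leq 2 n^{1-2\epsilon}$ gives
$$
|U| \leq 2 n^{1-2\epsilon}\bigl(n^{3 c_2} + M \log_d(n)\bigr) = n^{1-2\epsilon}\bigl(2 n^{3 c_2} + 2 M \log_d(n)\bigr),
$$
and the inequality $n^{4 c_2} \geq 2 n^{3 c_2} + 2 M \log_d(n)$ built into the choice of $N$ in Definition \ref{numbers} yields $|U| \leq n^{1 - 2\epsilon + 4 c_2}$. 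Since $c_2 = \epsilon/25$, one has $4 c_2 < \epsilon$, so $|U| \leq n^{1-\epsilon}$ as required.

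The main obstacle is purely bookkeeping: one has to check that the parameters $c_1$, $c_2$, $t(n)$ and the threshold $N$ have been chosen precisely so that the output of Mendel and Naor's Lemma \ref{mn7.5} is small enough to lie inside the regime controlled by $\mathcal{S}_\epsilon$, and so that the diameter and distortion bounds in (2) and (3) depend only on $d$, $\epsilon$, $M$ and not on $n$. Once this is verified, the lemma reduces to a direct composition of Lemma \ref{mn7.5} with the defining property of the class $\mathcal{S}_\epsilon$.
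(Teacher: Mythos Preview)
Your argument is correct and follows the same route as the paper's proof: apply Lemma~\ref{mn7.5} with $S=V_1$, $T$ the set of endpoints of edges in $I_G$, and $r=t(n)$, then feed the cardinality bound \eqref{sizeofu} through the arithmetic of Definition~\ref{numbers} to force $|U|\le n^{1-\epsilon}$ and invoke membership in $\mathcal{S}_\epsilon$. Your caution about the constant in item~(3) is well placed: the bound \eqref{hdist} literally gives $2\bigl(\tfrac{M}{c_1}+1\bigr)$ rather than the constant stated in the lemma, and indeed only the dependence on $d,\epsilon,M$ is used downstream (in Corollary~\ref{cembcor}).
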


\begin{proof}
In the notation of Lemma \ref{mn7.5}, let $r=t$, $T=\{x~|~\{x,y\}\in I \text{ for some }y\in V\}$, and $S=V_1$.  Let $U$ and $H$ be as in the conclusion of Lemma \ref{mn7.5}.  The fact that $U$ contains $V_1$ is obvious, and the statement about diameters is immediate from line \eqref{hdiam}, assumption \eqref{tass} from Definition \ref{numbers}, and assumption \eqref{diamass} from Definition \ref{class}.

Look now at part \ref{qi23}.  Line \eqref{hdist}, assumption \eqref{tass} from Definition \ref{numbers} and assumption \eqref{diamass} from Definition \ref{class} together imply that
\begin{align*}
d_G(x,y) & \leq d_H(x,y)\leq 2\Big(\frac{\text{diam}(G)}{r}+1\Big)d_G(x,y) \\ & \leq \Big(\frac{M\log_d(n)}{c_1\log_d(n)}+1\Big)d_H(x,y).
\end{align*}

Finally, we look at part \ref{qi24}.
Line \eqref{sizeofu} and assumptions \eqref{diamass} and \eqref{cycleass} from Definition \ref{class} imply that
\begin{align*}
|U| & \leq 2|I|(d(d-1)^{3t-1}+\text{diam}(G)) \leq 2n^{1-2\epsilon} (d^{3t}+M\log_d(n)).
\end{align*}
Combing this with assumptions \eqref{tass} and \eqref{Ndef} from Definition \ref{numbers} gives
$$
|U|\leq 2n^{1-2\epsilon}(n^{3c_2}+M\log_d(n))\leq n^{1-2\epsilon+4c_2}.
$$
Finally, assumption \eqref{c2def} from Definition \ref{numbers} implies that $1-2\epsilon+4c_2<1-\epsilon$ whence
$$
|U| \leq n^{1-\epsilon}= |V|^{1-\epsilon}.
$$
Hence by assumption \eqref{sepsass} from Definition \ref{class}, we have that for any $S$ which is a subset of $U$, 
$$
|E_H(S)|=|E_G(S)|<\Big(1+\frac{7}{\epsilon\log_d(n)}\Big)|S|,
$$
which gives the required sparseness property of $H$.  
\end{proof}

The following is a simple consequence of \cite[Corollary 6.6]{Mendel:2013uq}.

\begin{proposition}\label{cemb}
Let $H=(V,E)$ be a $(1+\delta)$-sparse graph.  Then there exists an absolute constant $C>0$ and a function
$$
f:V\to L^1
$$
such that for all $x,y\in V$,
$$
d_H(x,y)\leq \|f(x)-f(y)\|_{L^1}\leq C(1+\delta\text{diam}(H))d_H(x,y). \eqno \qed
$$
\end{proposition}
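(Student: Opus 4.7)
The plan is to extract this statement by a rescaling argument from Corollary 6.6 of \cite{Mendel:2013uq}. That corollary supplies, for any $(1+\delta)$-sparse graph $H$, an $L^1$-embedding together with a distortion bound of the form $O(1 + \delta \cdot \text{diam}(H))$; concretely, it produces a map $g : V \to L^1$ and positive constants $a \leq b$ with $b/a \leq C_0(1 + \delta \cdot \text{diam}(H))$ for some absolute constant $C_0$, such that
$$
a \, d_H(x,y) \leq \|g(x) - g(y)\|_{L^1} \leq b \, d_H(x,y)
$$
for all $x, y \in V$. My first step would be to invoke this result and fix such $g$, $a$, and $b$.

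Next, to match the normalization demanded by the statement (lower bound exactly $d_H(x,y)$, not a rescaling of it), I would set $f := (1/a) g$. The displayed bilipschitz inequality then rewrites as
$$
d_H(x,y) \leq \|f(x) - f(y)\|_{L^1} \leq (b/a)\, d_H(x,y) \leq C_0 \bigl(1 + \delta \cdot \text{diam}(H)\bigr) d_H(x,y),
$$
which is exactly the desired estimate with $C := C_0$. Since $L^1$ is a vector space, the rescaled map $f$ still lands in $L^1$, so no structural issue arises.

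The only real work here is bookkeeping. First, one must verify that Mendel--Naor's notion of $(1+\delta)$-sparseness in Corollary 6.6 coincides with Definition \ref{sparse} above; this should be immediate, since Definition \ref{sparse} is explicitly taken from \cite[Line (145)]{Mendel:2013uq}. Second, one must check that the constant hidden in their distortion bound is truly absolute, not dependent on the ambient graph $H$ or on $\delta$; this is what the $\lesssim$-convention in \cite{Mendel:2013uq} is meant to convey. Given that the author flags the result as a simple consequence of the cited corollary, I do not expect any genuinely new geometric ingredient to be needed, and the main obstacle is simply to transcribe Mendel--Naor's statement into the normalization used here.
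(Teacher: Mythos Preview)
Your proposal is correct and matches the paper's approach exactly: the paper gives no proof beyond citing \cite[Corollary 6.6]{Mendel:2013uq} and placing a \qed, so the rescaling you describe is precisely the ``simple consequence'' the author has in mind. The bookkeeping points you raise (matching the sparseness definition via \cite[Line (145)]{Mendel:2013uq} and the absoluteness of the constant) are the only things to verify, and both are immediate.
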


We are now able to complete the proof of Proposition \ref{geomthe}, part \ref{gtcemb}.

\begin{corollary}\label{cembcor}
Let $G=(V,E)$ be a graph in $\F$.  With notation as in Definition \ref{numbers} and Proposition \ref{geomthe}, the following holds. 

There are distortion functions $\rho_-,\rho_+$ that depend only on $d$, $\epsilon$, and $M$ and a map
$$
f:V_1\to \mathcal{H}
$$
from $V_1$ into a Hilbert space such that
$$
\rho_-(d_G(x,y))\leq \|f(x)-f(y)\|_\mathcal{H}\leq \rho_+(d_G(x,y))
$$
for all $x,y\in V_1$.
\end{corollary}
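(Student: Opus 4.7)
The plan is to assemble the pieces already in place. First I apply Lemma~\ref{qi2} to the graph $G \in \F$, obtaining a subset $U \supseteq V_1$ whose induced subgraph $H = (U, E_G(U))$ is $(1+\delta)$-sparse for $\delta = 7/(\epsilon \log_d(n))$, has diameter at most $(6c_2+2M)\log_d(n)$, and satisfies
$$
d_G(x,y) \leq d_H(x,y) \leq (M/c_1 + 1)\, d_G(x,y)
$$
for all $x,y \in V_1$. Then I apply Proposition~\ref{cemb} to $H$, producing a map $f \colon U \to L^1$ with
$$
d_H(x,y) \leq \|f(x)-f(y)\|_{L^1} \leq C\bigl(1 + \delta \cdot \text{diam}(H)\bigr)\, d_H(x,y).
$$

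The key observation is that the product $\delta \cdot \text{diam}(H)$ is bounded purely in terms of the parameters: indeed,
$$
\delta \cdot \text{diam}(H) \leq \frac{7}{\epsilon \log_d(n)} \cdot (6c_2 + 2M)\log_d(n) = \frac{7(6c_2 + 2M)}{\epsilon},
$$
which depends only on $\epsilon$ and $M$ (using $c_2 = \epsilon/25$). Restricting $f$ to $V_1$ and chaining the two bi-Lipschitz inequalities gives a bi-Lipschitz embedding of $(V_1, d_G)$ into $L^1$ whose distortion constants depend only on $d$, $\epsilon$, and $M$.

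The last step is to pass from $L^1$ to Hilbert space. Here I invoke the classical Schoenberg/square-root construction: there is a map $\psi \colon L^1 \to \mathcal{H}$ into a Hilbert space such that $\|\psi(u) - \psi(v)\|_{\mathcal{H}} = \sqrt{\|u-v\|_{L^1}}$ for all $u,v \in L^1$ (the function $\sqrt{\|\cdot\|_{L^1}}$ is negative type). Composing $\psi \circ f \colon V_1 \to \mathcal{H}$ converts the bi-Lipschitz $L^1$-embedding into a coarse embedding into Hilbert space, with distortion functions of the form $\rho_-(t) = A\sqrt{t}$ and $\rho_+(t) = B\sqrt{t}$ for constants $A, B$ depending only on $d,\epsilon, M$. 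Both are non-decreasing proper functions $[0,\infty)\to[0,\infty)$, finishing the proof.

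There is no serious obstacle here beyond the bookkeeping: the content is in Lemma~\ref{qi2} and Proposition~\ref{cemb}, and the one substantive point is verifying that the logarithmic growth of $\text{diam}(H)$ is exactly cancelled by the sparseness parameter $\delta = 7/(\epsilon\log_d(n))$, so that the resulting distortion no longer depends on $n$. The square-root trick to reach a \emph{Hilbert} space (rather than $L^1$) is routine but worth flagging, since it forces $\rho_\pm$ to be concave rather than linear.
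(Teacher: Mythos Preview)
Your proof is correct and follows essentially the same route as the paper: apply Lemma~\ref{qi2} and Proposition~\ref{cemb}, observe that $\delta\cdot\text{diam}(H)$ is bounded independently of $n$, and then compose with a coarse embedding of $L^1$ into Hilbert space. The only cosmetic difference is that the paper cites an abstract existence result for the last step (Nowak), whereas you make the embedding explicit via the square-root/Schoenberg construction; this is a perfectly valid (and in fact the standard) choice.
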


\begin{proof}
Let $U$ and $H$ be as in the conclusion of Lemma \ref{qi23}.  Proposition \ref{cemb} (with $\delta=\frac{7}{\epsilon\log_d(n)}$) implies the existence of a function
$$
f_0:U\to L^1
$$
such that 
\begin{equation}\label{firstl1}
d_H(x,y)\leq \|f(x)-f(y)\|_{L^1}\leq C\Big(1+\frac{7}{\epsilon\log_d(n)}\text{diam}(H)\Big)d_H(x,y).
\end{equation}
Note that  part \ref{qi22} of Lemma \ref{qi2} together with assumption \eqref{diamass} from Definition \ref{class} implies that the diameter of $H$ is at most $(6c_2+2M)\log_d(n)$, and combining this with line \eqref{firstl1} and part \ref{qi23} of Lemma \ref{qi2} again implies that for all $x,y\in V_1$,
$$
d_G(x,y)\leq \|f_0(x)-f_0(y)\|_{L^1}\leq C\Big(1+\frac{7(6c_2+2M)}{\epsilon}\Big)\Big(\frac{M}{c_1}+1\Big)d_G(x,y).
$$
In conclusion, the restriction of $f_0$ to $V_1$ is a bi-Lipschitz embedding into $L^1$ with respect to constants that depend only on $d$, $\epsilon$, and $M$.  The desired result follows on setting $f$ to be the composition of the restriction $f_0|_{V_1}$ of $f_0$ to $V_1$ with any fixed coarse embedding of $L^1$ into $L^2$ (for a proof that such a coarse embedding exists, see for example \cite[Proposition 4.1]{Nowak:2005vn}).
\end{proof}

\subsubsection*{Proof of Proposition \ref{geomthe}, part \ref{gtgirth2}}

The following is a special case of \cite[Lemma 7.7]{Mendel:2013uq}.

\begin{lemma}\label{mn7.7}
Let $r$ be a positive integer.  Let $G=(V,E)$ be a graph in $\F$.  With notation as in Definition \ref{numbers} and Proposition \ref{geomthe}, the following holds.

Write
$$
\eta:=\frac{1}{r+2t-3}.
$$
Assume that for every subset $S$ of $V$ we have that if
$$
|S|\leq d(d-1)^{\frac{t+r}{2}-1}|\mathfrak{C}_t(G)|
$$
then 
$$
|E_G(S)|\leq (1+\eta)|S|.
$$
Then for every distinct cycles $C_1,C_2$ in $\mathfrak{C}_t(G)$ we have that $d_G(C_1,C_2)\geq t$.  Moreover, the graph $L_G$ is connected. \qed
\end{lemma}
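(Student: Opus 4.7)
The plan is to prove both conclusions through a direct counting argument against the sparseness hypothesis. For the cycle-separation statement, I would argue by contradiction: suppose distinct cycles $C_1, C_2 \in \mathfrak{C}_t(G)$ satisfy $d_G(C_1, C_2) < t$, and let $P$ be a geodesic in $G$ of length $\ell$ joining them. Consider the candidate subgraph induced on $S := V(C_1) \cup V(C_2) \cup V(P)$. Each cycle contributes at most $t - 1$ vertices and $P$ adds at most $\ell - 1$ new ones (its two endpoints lying on the cycles), so $|S| \leq 2t + \ell - 3$. On the edge side, the cycles and the path contribute disjoint families of edges (the shortest-path property forces $P$'s internal vertices off the cycles), so $|E_G(S)| \geq |C_1| + |C_2| + \ell$, giving an excess $|E_G(S)| - |S| \geq 1$.

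The contradiction now comes from comparing this excess with $\eta |S| = |S|/(r + 2t - 3)$: whenever $|S| < r + 2t - 3$, the excess of $1$ beats $\eta|S|$, yielding $|E_G(S)| > (1 + \eta)|S|$ and violating the sparseness hypothesis. The hypothesis applies to $S$ since its size budget $d(d-1)^{(t+r)/2-1}|\mathfrak{C}_t(G)|$ grows exponentially in $t + r$ while $|S|$ grows only linearly. The pigeonhole $|S| \leq 2t + \ell - 3 < r + 2t - 3$ requires $\ell < r$, so for $r$ chosen appropriately (likely $r = t$ for the intended application) this forces $d_G(C_1, C_2) \geq t$.

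For the connectedness of $L_G$, the first conclusion is the essential input: distinct cycles in $\mathfrak{C}_t(G)$ are now vertex-disjoint, hence edge-disjoint, so for each $C$ the remaining cycle edges $E(C) \setminus \{e_C\}$ all lie in $L_G$ and form a path joining the two endpoints of $e_C$. Since membership in $\F$ forces $G$ to be connected, any two vertices of $V$ are joined by a $G$-path, and substituting each edge $e_C$ occurring on this path by the corresponding detour through $C$ produces a walk in $L_G$, proving connectedness.

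The main obstacle is the tight arithmetic in the counting step: the excess $|E_G(S)| - |S| \geq 1$ only marginally beats $\eta|S|$, and extracting the separation threshold $t$ as stated (rather than merely $r$) requires a careful choice of $r$ and possibly inflating $S$ to a full ball-neighborhood of radius roughly $(t+r)/2$, so as to saturate the size budget $d(d-1)^{(t+r)/2-1}|\mathfrak{C}_t(G)|$ and amplify the density gap. Reconstructing the precise Mendel--Naor balance between neighborhood radius, choice of $r$, and the resulting excess is the most delicate piece of the argument; a secondary (minor) wrinkle is handling the degenerate case where the two cycles share vertices, which only decreases $|S|$ while leaving $|E_G(S)|$ essentially intact and therefore strengthens the contradiction.
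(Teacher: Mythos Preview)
The paper does not prove this lemma; it is quoted (with a \qed) as a special case of \cite[Lemma 7.7]{Mendel:2013uq}.  So there is no ``paper's own proof'' to compare against, only the Mendel--Naor original.  That said, your strategy is exactly the Mendel--Naor one: take $S$ to be the union of the two short cycles together with a shortest path between them, count vertices versus edges, and observe that the excess $|E_G(S)|-|S|\ge 1$ violates the sparseness hypothesis once $|S|<r+2t-3$.  The connectedness argument you give (detour around each removed edge $e_C$ using the rest of $C$, which survives in $L_G$ because short cycles are pairwise far apart and hence edge-disjoint) is also correct and is how the Mendel--Naor proof proceeds.

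Two remarks.  First, your worry about needing to ``inflate $S$ to a full ball-neighborhood'' is unfounded: the bare set $S=V(C_1)\cup V(C_2)\cup V(P)$ already does the job, and the exponential size budget $d(d-1)^{(t+r)/2-1}|\mathfrak{C}_t(G)|$ is comfortably larger than the linear quantity $|S|\le 2t+\ell-3$ (note $|\mathfrak{C}_t(G)|\ge 2$ here).  Second, your observation that the argument literally yields $d_G(C_1,C_2)\ge r$ rather than $\ge t$ is accurate; the conclusion ``$\ge t$'' as printed is either a transcription slip from the source or is meant to be read with $r\ge t$ in mind.  In any case the only use of the lemma in the paper is Corollary~\ref{separatecycle}, which sets $r=t$, so the discrepancy is immaterial.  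The degenerate case (cycles sharing vertices) is, as you say, only easier: the excess $|E_G(S)|-|S|$ can only increase while $|S|$ decreases.
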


The next corollary points out that the assumptions in Lemma \ref{mn7.7} are automatically satisfied for graphs in $\F$.

\begin{corollary}\label{separatecycle}
Let $G=(V,E)$ be a graph in $\F$.  With notation as in Definition \ref{numbers} and Proposition \ref{geomthe}, the following holds. 

Every pair of cycles $C_1,C_2$ in $\mathfrak{C}_t(G)$ satisfies $d_G(C_1,C_2)\geq t$.  Moreover, $L$ is connected.
\end{corollary}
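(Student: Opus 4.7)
The plan is to verify that any graph $G \in \F$ automatically satisfies the hypothesis of Lemma \ref{mn7.7} for a suitable positive integer $r$, so that its conclusion immediately yields both the cycle-separation statement $d_G(C_1,C_2) \geq t$ and the connectedness of $L$. I would choose $r = t$; any $r$ of the same order as $t$ works, and this choice keeps the arithmetic clean while exploiting precisely the slack designed into the constants of Definition \ref{numbers}.

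With $r = t$, two ingredients need to be checked. First, I would verify that every $S \subseteq V$ with $|S| \leq d(d-1)^{t-1}|\mathfrak{C}_t(G)|$ in fact satisfies the weaker bound $|S| \leq |V|^{1-\epsilon}$ required by the $\mathcal{S}_\epsilon$ property. Combining assumption \eqref{cycleass} of Definition \ref{class} (giving $|\mathfrak{C}_t(G)| \leq n^{1-2\epsilon}$) with the estimate $d(d-1)^{t-1} \leq d^t \leq n^{c_2} = n^{\epsilon/25}$ coming from $t \leq c_2\log_d(n)$, the product is bounded by $n^{1-2\epsilon+\epsilon/25}$, which is smaller than $n^{1-\epsilon}$. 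Hence assumption \eqref{sepsass} applies and yields $|E_G(S)| < (1 + \tfrac{7}{\epsilon \log_d(n)})|S|$.

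Second, I would compare this with the edge-inequality $|E_G(S)| \leq (1+\eta)|S|$ demanded by Lemma \ref{mn7.7}, where $\eta = \tfrac{1}{3t-3}$ when $r = t$. This reduces to the numerical check $\tfrac{7}{\epsilon \log_d(n)} \leq \tfrac{1}{3t-3}$, i.e., $3t-3 \leq \tfrac{\epsilon}{7}\log_d(n)$, which follows at once from $t \leq \tfrac{\epsilon}{25}\log_d(n)$ together with $\tfrac{3}{25} < \tfrac{1}{7}$.

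I do not foresee a real obstacle; the constants $c_1 = \epsilon/50$ and $c_2 = \epsilon/25$ in Definition \ref{numbers} appear to have been chosen precisely so that the two inequalities above go through with room to spare. The only mild care needed is the choice of $r$: both constraints actually favor small $r$, so any $r$ of order $t$ suffices, and $r = t$ sits comfortably in this range. This also makes it clear that the conclusion follows uniformly for all graphs in the class $\F$, with no further assumptions needed beyond those already built into the definition.
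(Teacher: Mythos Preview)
Your proposal is correct and follows essentially the same route as the paper: set $r=t$ in Lemma~\ref{mn7.7}, use assumption~\eqref{cycleass} and $t\leq c_2\log_d(n)$ to bound the size threshold by $n^{1-\epsilon}$ so that the $\mathcal{S}_\epsilon$ property applies, and then verify the numerical inequality $\tfrac{7}{\epsilon\log_d(n)}\leq \tfrac{1}{3t-3}$. The arithmetic is arranged slightly differently (the paper phrases the last step via $c_2<\epsilon/21$ rather than your $\tfrac{3}{25}<\tfrac{1}{7}$), but the argument is the same.
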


\begin{proof}
In the statement of Lemma \ref{mn7.7}, set $r=t$.    Assume that 
\begin{equation}\label{ssize}
|S|\leq d(d-1)^{\frac{t+r}{2}-1}|\mathfrak{C}_t(G)|.
\end{equation}
Now, by assumption \eqref{cycleass} from Definition \ref{class} and assumption \eqref{tass} from Definition \ref{numbers}, we have that 
$$
d(d-1)^{\frac{t+r}{2}-1}|\mathfrak{C}_t(G)|\leq d^tn^{1-2\epsilon}\leq n^{c_2}n^{1-2\epsilon}.
$$
Note that assumption \eqref{c2def} from Definition \ref{numbers} implies that $c_2+1-2\epsilon<1-\epsilon$, whence line \eqref{ssize} implies that
$$
|S|\leq n^{c_2+1-2\epsilon}< n^{1-\epsilon}.
$$
The definition of the class $\mathcal{S}_\epsilon$ and assumption \eqref{sepsass} from Definition \ref{class} then implies that 
\begin{equation}\label{edgesize}
|E_G(S)|<\big(1+\frac{7}{\epsilon \log_d(n)}\big)|S|.
\end{equation}
On the other hand, assumption \eqref{c2def} from Definition \ref{class} implies that $c_2<\epsilon/21$, whence from assumption \eqref{tass}
$$
t\leq c_2\log_d(n)< \frac{\epsilon}{21}\log_d(n)+1,
$$
and so 
$$
\frac{7}{\epsilon \log_d(n)}\leq \frac{1}{3t-3}=\eta.
$$
Hence line \eqref{edgesize} implies that
$$
|E_G(S)|<(1+\eta)|S|,
$$
and we may appeal to Lemma \ref{mn7.7} to complete the proof.
\end{proof}

The next lemma completes the proof of Proposition \ref{geomthe}, part \ref{gtgirth2}.  It is essentially as an argument from \cite[page 67]{Mendel:2013uq}

\begin{lemma}\label{qi}
Let $G=(V,E)$ be a graph in $\F$.  With notation as in Definition \ref{numbers} and Proposition \ref{geomthe}, the following holds.

For all $x,y\in V_2$,
$$
d_G(x,y)\leq d_{L}(x,y)\leq 3 d_G(x,y).
$$
\end{lemma}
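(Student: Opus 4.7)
The left-hand inequality $d_G(x,y) \leq d_L(x,y)$ is immediate, since $L$ is a subgraph of $G$. My plan for the upper bound is to take a $G$-geodesic $\gamma = (x = x_0, x_1, \ldots, x_k = y)$ of length $k = d_G(x,y)$, let $i_1 < \cdots < i_m$ index the positions where $\gamma$ traverses an edge of $I_G$, write $e_j = \{x_{i_j}, x_{i_j+1}\}$, and for each $j$ choose a cycle $C_j \in \mathfrak{C}_t(G)$ with $e_{C_j} = e_j$. I will then obtain a walk in $L$ from $x$ to $y$ by replacing each edge $e_j$ of $\gamma$ with the path of length $|C_j|-1 \leq t-2$ in $C_j \setminus \{e_j\}$ joining the endpoints of $e_j$; this gives total length at most $(k-m) + m(t-2) = k + m(t-3)$.

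The first thing to verify is that these detours really lie in $L$. Since the $e_j$ are distinct (as $\gamma$ is simple) and each cycle has a fixed associated edge, the cycles $C_1,\ldots,C_m$ are distinct; by Corollary \ref{separatecycle}, they are then pairwise at $G$-distance at least $t \geq 1$, hence vertex-disjoint. In particular, no edge of $C_j$ other than $e_j$ can coincide with any $e_{C'}$ for $C' \neq C_j$, so the detour around $C_j$ uses only edges of $L$. The second step is to bound $m$ using the hypothesis $x,y \in V_2$: by definition of $V_2$ one has $d_G(x,e_1) > t/2$ and $d_G(y,e_m) > t/2$, which together with the geodesic property of $\gamma$ force $i_1 > t/2$ and $i_m < k - t/2 - 1$; for consecutive bad edges, $d_G(e_j,e_{j+1}) \geq d_G(C_j,C_{j+1}) \geq t$ and the geodesic property give $i_{j+1} - i_j \geq t+1$. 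Adding up, $(m-1)(t+1) \leq i_m - i_1 \leq k - t - 1$, so $m \leq k/(t+1) \leq k/t$, and the walk length is bounded by $k + (k/t)(t-3) < 2k \leq 3 d_G(x,y)$.

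The one delicate point is ensuring the detours stay inside $L$, which is exactly where Corollary \ref{separatecycle} enters (via vertex-disjointness of the $C_j$); everything else is a routine counting argument using the definition of $V_2$ and the fact that $\gamma$ is a $G$-geodesic. The factor $3$ in the statement is loose: the argument actually yields a bound strictly less than $2 d_G(x,y)$.
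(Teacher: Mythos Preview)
Your argument is correct and follows the paper's approach (replace each removed edge on a $G$-geodesic by the remainder of its cycle, using Corollary~\ref{separatecycle} both to ensure the detours stay in $L$ and to space out the bad edges); your count is in fact sharper than the paper's, which only uses $t(m-1)\leq k$ and $k\geq t$ to reach $3d_G(x,y)$, so your observation that one actually gets below $2d_G(x,y)$ is valid. One minor point of exposition: to conclude that the detour around $C_j$ lies entirely in $L$ you need $C_j$ to be vertex-disjoint from \emph{every} $C'\in\mathfrak{C}_t(G)$ with $C'\neq C_j$, not merely from $C_1,\ldots,C_m$; this is of course immediate from Corollary~\ref{separatecycle} as stated, but your ``In particular'' does not literally follow from the sentence preceding it.
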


\begin{proof}
The left inequality follows as every edge of $L$ is an edge of $G$; look then at the right inequality.   Let $x,y$ be points in $V_2$, and say $d_G(x,y)=n$.  Let $\gamma:\{0,...,n\}\to V$ be a `geodesic' between them for the metric $d_G$, i.e.\ $\gamma(0)=x$, $\gamma(n)=y$, and $\{\gamma(i),\gamma({i+1})\}$ is an edge for any $i=0,...,n-1$.  Note that if none of the edges $\{\gamma(i),\gamma({i+1})\}$ are in $I$, then $\gamma$ is also a geodesic in $L$, whence $d_{L}(x,y)\leq n$, and we are done.  Assume then that this does not happen, so at least one of the edges $\{\gamma(i),\gamma({i+1})\}$ is in $I$; note that as $x$ and $y$ are in $V_2$, this forces 
\begin{equation}\label{nleqt}
n\geq t.
\end{equation}

Now, say $i_i<i_2<\cdots <i_k$ is a maximal ordered subset of $\{1,...,n\}$ such that
\begin{equation}\label{missedges}
\{\gamma({i_1}),\gamma({i_1+1})\},...,\{\gamma({i_k}),\gamma({i_k+1})\}
\end{equation}
is a maximal collection of edges in $I$ that appear `in the path of $\gamma$'.  Lemma \ref{separatecycle} and the fact that $\gamma$ is a geodesic forces $i_j-i_{j-1}\geq t$ for all $j=2,...,k$.  In particular, 
\begin{equation}\label{knandt}
t(k-1)\leq n.
\end{equation}  
Now, each of the edges in line \eqref{missedges} lies on a cycle $C_j$ from $\mathfrak{C}_t(G)$.  Note that only one edge from each $C_j$ can  lie in $I$ by Lemma \ref{separatecycle}, whence we may replace each edge $\{\gamma({i_j}),\gamma({i_j+1})\}$ `in $\gamma$' that does not appear in $L$ by the rest of the cycle $C_j$, getting in this way a new path $\gamma'$ in $L$ between $x$ and $y$.  Using lines \eqref{nleqt} and \eqref{knandt}, the total length of this path $\gamma'$ is at most 
$$
n-k+k(t-1)= n-k+t(k-1)+t-k\leq 2n+t\leq 3n.
$$
Hence $d_{L}(x,y)\leq 3n$ as required. 
\end{proof}

\section{Asymptotic embeddings of graphs}\label{asemsec}

Our goal in this section is to prove the following theorem.

\begin{theorem}\label{fasemthe}
Fix an integer $d\geq 3$, and real numbers $\epsilon\in (0,1/5)$ and $M\in (0,\infty)$.  Let $(G_n)$ be a sequence of graphs such that $|G_n|$ tends to infinity as $n$ tends to infinity and such that the set 
$$
\{n\in \N~|~G_n\not\in \mathcal{F}(d,\epsilon,M,|G_n|)\}
$$
is finite.  Then the sequence of graphs $(G_n)$ asymptotically embeds into Hilbert space.
\end{theorem}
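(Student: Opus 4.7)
My plan is to combine the two structures provided by Proposition \ref{geomthe} to build a single kernel on each $V_n$. For $n$ in the cofinite set with $G_n \in \mathcal{F}(d,\epsilon,M,|G_n|)$, apply the proposition to obtain the cover $\{V_1^n, V_2^n\}$ of Lebesgue number $\geq t_n/2$ (where $t_n := t(|G_n|) \to \infty$), the uniform bi-Lipschitz Hilbert embedding $f_n^{(1)}: V_1^n \to \mathcal{H}$, and the subgraph $L_{G_n}$ of girth $\geq t_n$ on which $d_{G_n}$ and $d_{L_{G_n}}$ are $3$-bi-Lipschitz equivalent over $V_2^n$. Take $R_n := \lfloor t_n/6 \rfloor \to \infty$. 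By the Lebesgue number, every $S \subseteq V_n$ with $d_{G_n}$-diameter $\leq R_n$ lies wholly in $V_1^n$ or in $V_2^n$; moreover, for $S \subseteq V_2^n$, any $d_{G_n}$-geodesic between its points stays within $V_2^n$ (as $V_2^n$ is at distance $>t_n/2$ from the removed edges $I_{G_n}$), so $d_{G_n}|_S = d_{L_{G_n}}|_S$ coincides with the tree metric on a subtree of $L_{G_n}$ (since $S$ sits in an $L_{G_n}$-ball of radius $<t_n/2$, less than half the girth).

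I would set $K_n := K_n^{(1)} + K_n^{(2)}$. For $K_n^{(1)}$, extend $f_n^{(1)}$ via Kirszbraun's theorem (valid in the Hilbert setting) to a Lipschitz map $\tilde f_n: V_n \to \mathcal{H}$ and put $K_n^{(1)}(x,y) := \|\tilde f_n(x) - \tilde f_n(y)\|_{\mathcal{H}}^2$; this is globally conditionally negative definite as a squared Hilbert distance and satisfies $K_n^{(1)} \leq L^2 d_{G_n}^2$. For $K_n^{(2)}$ the simplest choice is $d_{G_n}(x,y)$ itself, yielding the uniform distortion bounds $\rho_-(t) := t$ and $\rho_+(t) := L^2 t^2 + t$ of Definition \ref{asym}. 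Crucially, $K_n^{(2)} = d_{G_n}$ is \emph{not} globally CND---in agreement with the expander obstruction to uniform coarse embeddability of the $V_n$---but only needs to be CND on subsets of diameter $\leq R_n$.

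Verifying the local CND of $d_{G_n}|_S$ is the key technical step. For $S \subseteq V_2^n$ it follows from the tree identification in the first paragraph (tree metrics isometrically embed into $\ell^1$, hence are CND). The principal obstacle is the case $S \subseteq V_1^n$: here $S$ lies in a ball $B := B_{G_n}(x_0, R_n)$ that by Corollary \ref{separatecycle} contains at most one cycle from $\mathfrak{C}_{t_n}(G_n)$, and by condition (A) of Definition \ref{class} is $(1 + O(1/\log|G_n|))$-sparse, so by Proposition \ref{cemb} admits a bi-Lipschitz embedding into $\ell^1$ with distortion uniform in $n$. Since bi-Lipschitz equivalence to an $\ell^1$-kernel does not by itself make $d_{G_n}|_S$ CND, my expectation is that the cleanest resolution is to replace $K_n^{(2)}$ by the pull-back kernel $\|g_n(x) - g_n(y)\|_{\ell^1}$ for a global Lipschitz extension $g_n: V_n \to \ell^1$ combining the $\ell^1$-embedding of the sparse subgraph $H \supseteq V_1^n$ from Lemma \ref{qi2} with a tree-isometric $\ell^1$-embedding of small $V_2^n$-scales obtained from a BFS tree of $L_{G_n}$. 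This pull-back is manifestly CND as an $\ell^1$-distance, preserves the required distortion bounds via the uniform bi-Lipschitz property on each piece, and does not produce a global coarse embedding (the expander obstruction is avoided because $g_n$ is bi-Lipschitz only on scales $\leq R_n$, not globally).
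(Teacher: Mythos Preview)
Your proposal isolates the right geometric inputs from Proposition~\ref{geomthe}, but there is a genuine gap at the gluing step, and it is not merely technical. In your second attempt you take $K_n = K_n^{(1)} + K_n^{(2)}$ where $K_n^{(1)}$ is a squared Hilbert distance and $K_n^{(2)} = \|g_n(\cdot)-g_n(\cdot)\|_{\ell^1}$ is an $\ell^1$ distance. Both of these are \emph{globally} conditionally negative definite, hence so is their sum. But a globally CND kernel on $V_n$ satisfying uniform two-sided distortion bounds $\rho_-(d_{G_n}) \le K_n \le \rho_+(d_{G_n})$ would give a uniform coarse embedding of the $G_n$ into Hilbert space, contradicting the expander condition~(\ref{expass}) of Definition~\ref{class}. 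So at least one of the distortion bounds must fail. Concretely: your Kirszbraun extension $\tilde f_n$ controls $K_n^{(1)}$ from below only on $V_1^n$; your map $g_n$, if Lipschitz, controls $K_n^{(2)}$ from below only at scales $\le R_n$ (as you yourself note). For a pair $x,y\in V_2^n\setminus V_1^n$ with $d_{G_n}(x,y)$ large, neither summand gives a lower bound, and the required inequality $\rho_-(d_{G_n}(x,y))\le K_n(x,y)$ is simply not established. The suggested ``combination'' of the $\ell^1$-embedding of $H$ with a BFS tree of $L_{G_n}$ does not escape this dichotomy: the tree distance $d_T$ dominates $d_{G_n}$ globally but is not bounded above by a uniform multiple of $d_{G_n}$ (so $\rho_+$ fails), while any genuinely Lipschitz $g_n$ sacrifices the global lower bound. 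Your first attempt ($K_n^{(2)}=d_{G_n}$) has the right shape---$d_{G_n}$ is \emph{not} globally CND, consistent with the expander obstruction---but you correctly identify that $d_{G_n}|_S$ is not known to be CND when $S\subseteq V_1^n$, and your proposed fix reintroduces the obstruction above.

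The paper confronts exactly this difficulty and remarks that no direct gluing of negative-type kernels is known. Its resolution is to pass through positive-type kernels: one first produces, via Lemma~\ref{parfce} and Schoenberg's theorem (Lemma~\ref{quantrel}(i)), separate positive-type kernels $k_1,k_2$ on $V_1^n,V_2^n$ with the appropriate decay, and then glues them with a partition of unity $\{\phi_1,\phi_2\}$ subordinate to the cover via $k=\phi_1 k_1 \phi_1 + \phi_2 k_2 \phi_2$ (Proposition~\ref{union}, following Dadarlat--Guentner). The point is that this operation preserves positive type on sets of diameter $\le R_n$, and the growing Lebesgue number is used to make the $\phi_i$ vary slowly enough that the $(r,\delta)$-estimates survive. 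One then converts back to a single kernel $K_n$ via Lemma~\ref{quantrel}(ii). The resulting $K_n$ is only CND on sets of bounded diameter---as it must be---yet satisfies both distortion bounds globally, because those bounds are encoded in the decay family $\Gamma$ rather than in any one bi-Lipschitz map.
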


The proof proceeds by patching together the information from Proposition \ref{geomthe} using an appropriate `union' result inspired by work of Dadarlat and Guentner \cite[Section 3]{Dadarlat:2007qy}.  Most of the section is taken up with proving this union result.

Throughout this section we again fix an integer $d\geq 3$, a real number $\epsilon\in (0,1/5)$, and a real number $M\in (0,\infty)$.  

In order to be able to talk about single graphs, as opposed to sequence of graphs, we start by formulating a `quantitative' variant of asymptotic embeddability.

\begin{definition}\label{cnt}
Let $Y$ be a set.  A kernel 
$$
K:Y\times Y \to\R_+
$$ 
is \emph{(conditionally) negative type} if for any finite subset $\{y_1,...,y_n\}$ of $Y$ and any finite subset $\{z_1,...,z_n\}$ of $\C$ such that $\sum_{i=1}^n z_i=0$ we have that 
$$
\sum_{i,j=1}^n z_i\overline{z_j}K(y_i,y_j)\leq 0.
$$
\end{definition}

\begin{definition}\label{qfce}
Let $(\rho_-,\rho_+)$ be a pair of distortion functions, and $R\in[0,\infty]$ be an extended real number.  A metric space $Y$ satisfies \emph{property $K(\rho_-,\rho_+,R)$} if there exists a kernel 
$$
K:Y\times Y\to \R_+
$$
satisfying the following conditions.
\begin{enumerate}
\item For all $x,y\in Y$, $K(x,x)=0$ and $K(x,y)=K(y,x)$.
\item For all $x,y\in Y$,
$$
\rho_-(d(x,y))\leq K(x,y)\leq \rho_+(d(x,y)).
$$
\item For any subset $E$ of $Y$ of diameter at most $R$, the restriction of $K$ to $E\times E$ is negative type.
\end{enumerate}
\end{definition}

\begin{lemma}\label{parfce}
There exists a pair of distortion functions $(\rho_-,\rho_+)$ and a constant $c>0$ depending only on $d$, $\epsilon$, and $M$ such that the following holds.  

Let $N$ be as in assumption \eqref{Ndef} from Definition \ref{numbers}, and let $n\geq N$.  For any graph $G$ in $\F$, adopting notation as in Proposition \ref{geomthe} and Definition \ref{qfce}, the metric space $(V_1,d_G)$ is in $K(\rho_-,\rho_+,\infty)$ and $(V_2,d_G)$ is in $K(\rho_-,\rho_+,c\log_d(n))$.
\end{lemma}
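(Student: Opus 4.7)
The plan is to construct the two kernels separately on $V_1$ and $V_2$, using the two different ingredients supplied by Proposition~\ref{geomthe}, and then to take a common pair of distortion functions and constant $c$ by combining the two cases.

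For $V_1$, I would simply set $K_1(x,y) := \|f(x)-f(y)\|_{\mathcal{H}}^2$, where $f : V_1 \to \mathcal{H}$ is the embedding supplied by Proposition~\ref{geomthe}(3). Squares of Hilbert-space distances are automatically conditionally negative type on the whole domain (from $\sum z_i = 0$ one gets $\sum z_i \overline{z_j}\|f(x_i)-f(x_j)\|^2 = -2\|\sum z_i f(x_i)\|^2 \leq 0$). Hence $K_1$ witnesses property $K(\rho_-^{(1)}, \rho_+^{(1)}, \infty)$, where $\rho_\pm^{(1)}$ are the squares of the distortion functions from Proposition~\ref{geomthe}(3); these depend only on $d$, $\epsilon$, $M$.

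For $V_2$, I would take $K_2(x,y) := d_{L_G}(x,y)$. Part (4) of Proposition~\ref{geomthe} immediately gives the two-sided bound $d_G \leq K_2 \leq 3 d_G$, yielding distortion functions $\rho_-^{(2)}(r) = r$ and $\rho_+^{(2)}(r) = 3r$. The non-trivial point is local negative-typeness. For this I would invoke two standard facts: (i) the path metric on a tree is conditionally negative type (trees isometrically embed into $L^1$, whose metric is negative type by Schoenberg's theorem); and (ii) in a graph of girth $g$, every ball of radius strictly less than $g/2$ is a tree. Now fix a subset $E \subseteq V_2$ of $d_G$-diameter at most $R$. For any basepoint $v \in E$, every point of $E$ lies within $d_{L_G}$-distance $3R$ of $v$, and by the triangle inequality any $L_G$-geodesic between two points of $E$ is contained in the ball $B_{L_G}(v, 6R)$. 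Proposition~\ref{geomthe}(2) ensures that $L_G$ has girth at least $t(n) \geq c_1\log_d(n)$, so if we fix $c < c_1/12$ and assume $R \leq c\log_d(n)$, then $B_{L_G}(v, 6R)$ is a tree. Consequently $d_{L_G}$ restricted to $E\times E$ coincides with a subtree metric and is therefore conditionally negative type.

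The main step to be verified carefully is that shortest $L_G$-paths between points of the small subset $E$ cannot leave a ball of $L_G$ which is itself a tree; this is the triangle-inequality observation above, and everything else is bookkeeping. To finish, I would set $\rho_- := \min(\rho_-^{(1)}, \rho_-^{(2)})$ and $\rho_+ := \max(\rho_+^{(1)}, \rho_+^{(2)})$, obtaining a common pair of distortion functions depending only on $d$, $\epsilon$, $M$, and take $c$ as above (which also depends only on these parameters via $c_1$).
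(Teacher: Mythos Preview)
Your proof is correct and follows essentially the same approach as the paper: for $V_1$ you use the Hilbert-space embedding from Proposition~\ref{geomthe}(3) (the paper says ``immediate'', you make the squared-distance kernel explicit), and for $V_2$ you use $K_2=d_{L_G}$ together with the large-girth-implies-locally-a-tree argument and the negative-type property of tree metrics. The only cosmetic differences are your explicit computation for $V_1$ and a slightly smaller constant $c<c_1/12$ in place of the paper's $c=c_1/6$.
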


\begin{proof}
The claim on $V_1$ follows immediately from point \ref{gtcemb} in Proposition \ref{geomthe}, if we take a pair of distortion functions with $\rho_-$ no bigger and $\rho_+$ no smaller than the corresponding distortion functions given there.

To see the claim on $V_2$, define $K:V_2\times V_2\to\R_+$ by
$$
K(x,y)=d_L(x,y)
$$
where $L$ is the graph defined in line \eqref{ldef} above.

To see this function has the right properties, note that Proposition \ref{geomthe} part \ref{gtgirth2} implies that 
$$
d_G(x,y)\leq K(x,y)\leq 3d_G(x,y)
$$ 
for all $x,y\in V_2$, whence any $\rho_-$ and $\rho_+$ satisfying $\rho_-(r)\leq r$ and $\rho_+(r)\geq 3r$ will work.   

Let now $c_1=\epsilon/50$ as in Definition \ref{numbers}, and set $c=c_1/6$, which only depends on $\epsilon$.  Then if $E$ is a subset of $V_2$ of diameter at most $c\log_d(n)$, we have by part \ref{gtgirth2} of Proposition \ref{geomthe} that the diameter of $E$ in $(V_2,d_L)$ is at most $\frac{c_1}{2}\log_d(n)$.  As the girth of $L$ is at least $c_1\log_d(n)$ by part \ref{gtgirth} of Proposition \ref{geomthe}, the metric space $(E,d_L)$ is isometric to a subset of a tree.  The proof is completed by the well-known observation of Haagerup \cite[proof of Lemma 1.2]{Haagerup:1979rq} and Watatani \cite{Watatani:1982ys} (see also Julg and Valette \cite[Lemma 2.3]{Julg:1984rr} for a simpler proof) that the distance function on a tree is a negative type kernel. 
\end{proof}

In the remainder of this section, our goal is to `glue' this information together to prove that any graph in $\F$ has property $K(\rho_-,\rho_+,c\log_d(n))$ for some appropriate pair of distortion functions and $c>0$; it is then not difficult to see that this implies Theorem \ref{fasemthe}.  Unfortunately, we do not know a direct way to do this, and must first reformulate condition $K(\rho_-,\rho_+,c\log_d(n))$ in terms of positive type kernels.

\begin{definition}\label{pt}
Let $Y$ be a set.  A kernel function 
$$
k:Y\times Y \to\R_+
$$ 
is \emph{positive type} if for any finite subset $\{y_1,...,y_n\}$ of $Y$ and any finite subset $\{z_1,...,z_n\}$ of $\C$ we have that 
$$
\sum_{i,j=1}^n z_i\overline{z_j}k(y_i,y_j)\geq 0.
$$
\end{definition}

Here is an analogue of Definition \ref{qfce2} for positive type kernels.

\begin{definition}\label{qfce2}
A \emph{decay function} is a bounded non-increasing function 
$$
\gamma:[0,\infty)\to[0,\infty)
$$
that tends to zero at infinity.  A \emph{decay family} is a collection $\Gamma=\{\gamma_{r,\delta}\}$ of decay functions parametrised by $(r,\delta)\in [0,\infty)\times(0,1]$. \\ 

Let $\Gamma=\{\gamma_{r,\delta}\}$ be a decay family, and $R\in[0,\infty]$ be an extended real number.   A metric space $Y$ satisfies \emph{property $k(\Gamma,R)$} if for any $(r,\delta)\in [0,\infty)\times(0,1]$ there exists a  kernel
$$
k:Y\times Y\to[0,1]
$$
satisfying the following conditions.
\begin{enumerate}
\item For all $x,y\in Y$ $k(x,x)=1$ and $k(x,y)=k(y,x)$.
\item For all $x,y\in Y$ with $d(x,y)\leq r$, 
$$
1-k(x,y)<\delta.
$$
\item For all $x,y\in Y$,
$$
k(x,y)\leq \gamma_{r,\delta}(d(x,y)).
$$
\item For any subset $E$ of $Y$ of diameter at most $R$, the restriction of $k$ to $E\times E$ is positive type.
\end{enumerate}
\end{definition}

We now proceed to relate the properties in Definitions \ref{qfce} and \ref{qfce2}.  The following result of Schoenberg \cite{Schoenberg:1938ul} is a crucial ingredient; see \cite[Section 11.2]{Roe:2003rw} or \cite[Section 3.2]{Willett:2009rt} for a proof and further background.

\begin{theorem}\label{kerthe}
Let $Y$ be a set.  If $K$ is a negative type kernel on $Y$, then for any $t>0$ the kernel defined by $k(x,y):=e^{-tK(x,y)}$ is positive type.   \qed
\end{theorem}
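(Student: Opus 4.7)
The plan is to prove Schoenberg's classical theorem in the standard two-step fashion: realize the conditionally negative type kernel $K$ as a squared distance in some Hilbert space, then verify that Gaussian kernels on Hilbert space are positive type.

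First I would reduce to the case $K(x,x) = 0$. If that fails, set $\tilde K(x,y) := K(x,y) - \tfrac{1}{2}(K(x,x) + K(y,y))$; the diagonal correction is annihilated by any coefficients summing to zero, so $\tilde K$ is still conditionally negative type, and the identity $e^{-tK(x,y)} = e^{-t\tilde K(x,y)} \cdot e^{-tK(x,x)/2} \cdot e^{-tK(y,y)/2}$ exhibits $e^{-tK}$ as the pointwise product of $e^{-t\tilde K}$ with the rank-one positive type kernel $h(x)\overline{h(y)}$ for $h(x) := e^{-tK(x,x)/2}$. (In all applications of interest in the paper one has $K(x,x)=0$ by hypothesis, so this step is free.)

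Next, fix a basepoint $x_0 \in Y$ and form the kernel
$$\langle x, y\rangle := \tfrac{1}{2}\bigl(K(x,x_0) + K(y,x_0) - K(x,y)\bigr).$$
Given $x_1, \ldots, x_n \in Y$ and scalars $c_1, \ldots, c_n \in \mathbb{C}$, set $c_0 := -\sum_{i=1}^n c_i$ so that $c_0 + c_1 + \cdots + c_n = 0$; applying the conditionally negative type hypothesis to $(x_0, x_1,\ldots, x_n)$ with these coefficients and using $K(x_0,x_0)=0$ yields $\sum_{i,j=1}^n c_i\overline{c_j}\langle x_i, x_j\rangle \geq 0$. Quotienting the free $\mathbb{C}$-vector space on $Y$ by the null space of this form and completing produces a Hilbert space $\mathcal{H}$ together with a map $f: Y \to \mathcal{H}$ satisfying $\|f(x) - f(y)\|^2 = K(x,y)$.

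Finally I would show that $(u,v)\mapsto e^{-t\|u-v\|^2}$ is positive type on any Hilbert space, which composed with $f$ delivers the theorem. Factor
$$e^{-t\|u-v\|^2} = e^{-t\|u\|^2}\, e^{-t\|v\|^2}\, e^{2t\langle u,v\rangle};$$
the first two factors form a manifestly positive type rank-one kernel, so it suffices to show $(u,v)\mapsto e^{2t\langle u,v\rangle}$ is positive type. Expanding the exponential,
$$e^{2t\langle u,v\rangle} = \sum_{k=0}^\infty \frac{(2t)^k}{k!}\langle u,v\rangle^k,$$
and each summand $\langle u,v\rangle^k = \langle u^{\otimes k}, v^{\otimes k}\rangle_{\mathcal{H}^{\otimes k}}$ is a Gram kernel on the $k$-fold tensor power and hence positive type; non-negative linear combinations and pointwise products of positive type kernels are again positive type, which finishes the argument. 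The main subtle point is really only the tensor-power observation that $\langle u,v\rangle^k$ is positive type; everything else is formal manipulation of the definitions, and the GNS-style construction is routine, consistent with the theorem being classical and thoroughly documented in the references the paper cites.
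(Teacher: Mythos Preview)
The paper does not actually prove this theorem: it is stated as Schoenberg's classical result, marked with a \qed, and the reader is referred to \cite{Schoenberg:1938ul}, \cite[Section 11.2]{Roe:2003rw} or \cite[Section 3.2]{Willett:2009rt} for a proof. Your proposal is the standard two-step argument (GNS realization of $K$ as a squared Hilbert distance, followed by the Fock-space expansion of the Gaussian kernel) and is correct; there is nothing in the paper to compare it against.

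One minor technical remark: over a \emph{complex} Hilbert space the identity should read $e^{-t\|u-v\|^2}=e^{-t\|u\|^2}e^{-t\|v\|^2}e^{2t\operatorname{Re}\langle u,v\rangle}$, not $e^{2t\langle u,v\rangle}$. This causes no trouble here, since your GNS form $\langle x,y\rangle=\tfrac12\bigl(K(x,x_0)+K(y,x_0)-K(x,y)\bigr)$ is real-valued, so $\langle f(x),f(y)\rangle\in\R$ for all $x,y\in Y$; alternatively one can simply run the GNS construction over $\R$ from the start. Either way the power-series / tensor-power argument goes through unchanged.
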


\begin{lemma}\label{quantrel}
\begin{enumerate}[(i)]
\item For any pair of distortion functions $(\rho_-,\rho_+)$ there exists a family of decay functions $\Gamma$ such that for any $R\in[0,\infty]$, property $K(\rho_-,\rho_+,R)$ implies property $k(\Gamma,R)$.
\item For any family of decay functions $\Gamma$ there exists a pair of distortion functions $(\rho_-,\rho_+)$ such that for any $R\in[0,\infty]$, property $k(\Gamma,R)$ implies property $K(\rho_-,\rho_+,R)$.
\end{enumerate}
\end{lemma}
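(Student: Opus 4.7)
My plan is to prove both implications using Schoenberg's Theorem \ref{kerthe}, exploiting the correspondence between positive-type kernels $k$ and negative-type kernels $K$ via $k = e^{-tK}$ (together with the fact that $1 - k$ is an approximate negative-type replacement for $K$).

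First I will handle part (i). Given a space $Y$ with property $K(\rho_-,\rho_+,R)$ witnessed by a kernel $K$, and given parameters $(r,\delta)$, I choose a scalar $t = t(r,\delta) > 0$ small enough that $1 - e^{-t\rho_+(r)} < \delta$, and set $k(x,y) := e^{-tK(x,y)}$. By Schoenberg, $k$ is positive type on each subset of diameter at most $R$, where $K$ is negative type. Verifying Definition \ref{qfce2} is then direct: $k(x,x) = e^{0} = 1$; for $d(x,y)\leq r$ one has $k(x,y) \geq e^{-t\rho_+(r)} > 1-\delta$; and $k(x,y) \leq e^{-t\rho_-(d(x,y))}$, so the formula $\gamma_{r,\delta}(s) := e^{-t(r,\delta)\rho_-(s)}$ defines a decay function (non-increasing, bounded by $1$, tending to $0$ by properness of $\rho_-$).

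For part (ii), the plan is to assemble a single negative-type kernel $K$ from the pieces $1 - k_n$, where each $k_n$ witnesses property $k(\Gamma,R)$ at the parameters $(r_n,\delta_n) := (n, 2^{-n})$. Before doing so I will replace $\Gamma$ by the enlarged family $\tilde\gamma_{r,\delta}(s) := 1$ for $s \leq r$ and $\tilde\gamma_{r,\delta}(s) := \gamma_{r,\delta}(s)$ for $s > r$; this only weakens the hypothesis and forces a useful quantitative bound below. Setting
\[
K(x,y) := \sum_{n=1}^{\infty} \bigl(1 - k_n(x,y)\bigr),
\]
pointwise convergence together with the estimate $K(x,y) \leq d(x,y) + 1$ follow by splitting the series at $n = d(x,y)$ and using $1 - k_n < 2^{-n}$ on the tail. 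Each summand $1 - k_n$ is negative type on any diameter-$\leq R$ subset by the standard identity with $\sum z_i = 0$; summing and passing to the pointwise limit preserves this (the relevant finite-dimensional inequality is preserved under pointwise limits), so $K$ is negative type on such subsets. Thus $\rho_+(t) := t + 1$ is a valid upper distortion function.

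The main obstacle will be producing a proper lower distortion function $\rho_-$ depending only on $\Gamma$. To this end, I define $T_n := \inf\{s \geq 0 : \tilde\gamma_n(s) \leq 1/2\}$ where $\tilde\gamma_n := \tilde\gamma_{n,2^{-n}}$; the normalization forces $\tilde\gamma_n(s) = 1 > 1/2$ for $s \leq n$, hence $T_n > n$ and in particular $T_n \to \infty$ (crucially, $T_n$ is determined by $\Gamma$ alone). For $d(x,y) \geq T_n$ one has $k_n(x,y) \leq \tilde\gamma_n(d(x,y)) \leq 1/2$, giving $1 - k_n(x,y) \geq 1/2$ and therefore
\[
K(x,y) \;\geq\; \tfrac{1}{2}\bigl|\{n \in \N : T_n \leq d(x,y)\}\bigr|.
\]
Setting $\rho_-(t) := \tfrac{1}{2}|\{n : T_n \leq t\}|$ defines a function that is non-decreasing by construction and is proper precisely because $T_n \to \infty$, completing the plan.
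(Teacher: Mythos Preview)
Your proof is correct and follows essentially the same route as the paper: part~(i) via Schoenberg with $k=e^{-tK}$, and part~(ii) by setting $K=\sum_n(1-k_n)$ for the kernels at parameters $(n,2^{-n})$, with the same $\rho_+(t)=t+1$ and a counting function for $\rho_-$. Your explicit normalization of $\Gamma$ (which should really be $\tilde\gamma_{r,\delta}:=\max(\gamma_{r,\delta},\mathbf{1}_{[0,r]})$ so that it stays non-increasing) is in fact a cleaner way to force $T_n\to\infty$ than the paper's slightly informal justification that $s_N\geq N$.
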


\begin{proof}
Look first at part (i).  Assume that $Y$ satisfies property $K(\rho_-,\rho_+,R)$, and adopt notation from that condition.  For each $t>0$, let 
$$
k_t(x,y)=e^{-tK(x,y)}.
$$
For each $(r,\delta)$, let $t$ be small enough that
$$
1-e^{-t\rho_+(r)}< \delta
$$
and set $\gamma_{r,\delta}(s)=e^{-t\rho_-(s)}$, with $\Gamma$ the corresponding family of decay functions.  Conditions 1, 2 and 3 from Definition \ref{qfce2} follow easily, and condition 4 follows from Theorem \ref{kerthe} applied to the restriction of $K$ to any set of diameter at most $R$.\\

Look now at part (ii).  Assume that $Y$ satisfies property $k(\Gamma,R)$.  For each $n\in \N$, let $k_n(x,y)$ satisfy the conditions in Definition \ref{qfce2} with respect to $R$, $r=n$ and $\delta=2^{-n}$, so in particular $1-k_n(x,y)<2^{-n}$ whenever $d(x,y)\leq n$ and $k_n(x,y)\leq \gamma_{n,2^{-n}}(d(x,y))$ for all $x,y\in X$.  For each $x,y\in Y$ define
$$
K(x,y)=\sum_{n=1}^\infty (1-k_n(x,y)).
$$  
This converges as for any given $x,y$, the $n^\text{th}$ term will be less than $2^{-n}$ for all but finitely many $n$.  A direct check shows that $1-k_n(x,y)$ is negative type when restricted to any set of diameter at most $R$ for each $n$; the corresponding fact for $K$ follows as `being negative type' is clearly preserved under finite sums and pointwise limits.  Condition 1 from property $K(\rho_-,\rho_+,R)$ is obvious, so it remains to find $\rho_-$ and $\rho_+$ satisfying condition 2.

Note first that if $d(x,y)\leq r$ then
$$
K(x,y)\leq \sum_{n=1}^{\lfloor r \rfloor} 1+\sum_{n=\lfloor r\rfloor+1}^\infty 2^{-n}\leq r+1,
$$
so we may take $\rho_+(r)=r+1$.  
On the other hand, for each $N\geq 1$ choose $s_N$ to be such that for all $n\leq N$ and all $s\geq s_N$ we have 
$$
\gamma_{n,2^{-n}}(s)\leq 1/2;
$$
$s_N$ exists as each $\gamma_{n,2^{-n}}$ tends to zero at infinity.  Moreover, as $1-k_N(x,y)< 2^{-N}$ whenever $d(x,y)\leq N$, we must have $s_N\geq N$, and thus the sequence $(s_N)_{N=1}^\infty$ tends to infinity.  Then for any $x,y$ with $d(x,y)\geq s_N$
$$
K(x,y)\geq \sum_{n=1}^N 1-k_n(x,y) \geq \sum_{n=1}^\infty 1-\gamma_{n,2^{-n}}(d(x,y)) \geq 1-\gamma_{n,2^{-n}}(s_N) \geq  \frac{1}{2}N.
$$
We may thus choose 
$$
\rho_-(s)=\frac{1}{2}\max\{N~|~d(x,y)\geq s_N\},
$$
which tends to infinity as $(s_N)$ does.  Note that $\rho_-$ can be chosen only to depend on the family $\Gamma$, so we are done.
\end{proof}

Our next goal is a union result for metric spaces with the properties $k(\Gamma,R)$, in order to patch together the data on $V_1$ and $V_2$ from Lemma \ref{parfce}.  The following lemma is folklore.

\begin{lemma}\label{poulem}
Fix $r,\delta>0$, and let $\{V_1,V_2\}$ be a cover of a metric space $V$ such that $C:=4r/\delta^2$ is a Lebesgue number for the cover\footnote{Recall this means that any open ball of radius $C$ in $V$ is completely contained in either $V_1$ or $V_2$.}.  Then there exists a `partition of unity' on $V$ consisting of two functions 
$$
\phi_1:V\to[0,1],~~~\phi_2:V\to[0,1]
$$
with the following properties.
\begin{enumerate}
\item For $i=1,2$, $\phi_i$ is supported in $V_i$.
\item For all $x\in V$, $(\phi_1(x))^2+(\phi_2(x))^2=1$.
\item For $i=1,2$,
$$
\sup\{|\phi_i(x)-\phi_i(y)|~|~d(x,y)\leq r\}<\delta.
$$
\end{enumerate} 
\end{lemma}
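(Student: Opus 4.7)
The plan is to construct the partition of unity by directly normalizing distance-to-complement functions. For $i = 1, 2$ set
\[
f_i(x) := d(x, V \setminus V_i),
\]
which is $1$-Lipschitz, non-negative, and vanishes outside $V_i$. The Lebesgue number hypothesis is used just once: at each $x \in V$, the ball $B(x; C)$ is contained in some $V_i$, which forces $f_i(x) \geq C$ for that $i$ and hence
\[
F(x) := \sqrt{f_1(x)^2 + f_2(x)^2} \geq C
\]
uniformly. Put $\phi_i := f_i/F$; properties 1 and 2 of the lemma are immediate from this construction.

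The only real work lies in the oscillation bound in property 3. I would split
\[
\phi_i(x) - \phi_i(y) \;=\; \frac{f_i(x) - f_i(y)}{F(x)} + f_i(y)\Bigl(\tfrac{1}{F(x)} - \tfrac{1}{F(y)}\Bigr),
\]
bound the first summand by $d(x,y)/C$ using the $1$-Lipschitz-ness of $f_i$ combined with $F \geq C$, and bound the second using $f_i \leq F$ together with the $\sqrt{2}$-Lipschitz-ness of $F$ (which drops out of viewing $(f_1, f_2)$ as a $1$-Lipschitz map into Euclidean $\R^2$ and applying the reverse triangle inequality). The combined estimate is
\[
|\phi_i(x) - \phi_i(y)| \leq (1 + \sqrt{2})\, d(x,y)/C,
\]
and substituting $d(x,y) \leq r$ and $C = 4r/\delta^2$ gives the bound $(1 + \sqrt{2})\delta^2/4$, which lies strictly below $\delta$ throughout $(0, 1]$.

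Nothing in this is genuinely difficult; the lemma is labelled folklore and the argument reduces to a standard Lipschitz calculation. The Lebesgue constant $4r/\delta^2$ stated in the hypothesis is in fact more generous than the direct construction above requires (the sharper threshold one obtains this way is $C \gtrsim r/\delta$); the extra slack reflects that a perhaps more conceptual route, via $\phi_i := \sqrt{g_i}$ for $g_i$ the standard linear partition of unity $g_i := f_i/(f_1 + f_2)$, would only produce a $\sqrt{d(x,y)/C}$-type oscillation bound and would genuinely need the full $C \gtrsim r/\delta^2$. Either construction suffices.
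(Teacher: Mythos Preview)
Your argument is correct and is the cleaner of the two standard constructions. One cosmetic point: the lemma allows arbitrary $\delta>0$, not just $\delta\in(0,1]$; but since your $\phi_i$ take values in $[0,1]$ the oscillation is at most $1$, so the case $\delta>1$ is trivial. You might also note that the degenerate case $V_i=V$ (so $f_i\equiv+\infty$) is handled by taking $\phi_i\equiv 1$, $\phi_{3-i}\equiv 0$.

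The paper takes the other route you anticipate in your final paragraph. It passes to the $C$-interiors $V_i^{(C)}:=\{x\in V_i:d(x,V\setminus V_i)\geq C\}$, builds bounded $1/C$-Lipschitz bump functions $\psi_i(x)=\max\{1-d(x,V_i^{(C)})/C,0\}$, and then sets $\phi_i=\sqrt{\psi_i/(\psi_1+\psi_2)}$. The oscillation estimate there goes through $|\sqrt{a}-\sqrt{b}|\leq\sqrt{|a-b|}$, giving $|\phi_i(x)-\phi_i(y)|\leq\sqrt{2r/C}$, which is exactly why the hypothesis is calibrated to $C=4r/\delta^2$. Your direct $\ell^2$-normalisation of the raw distance functions avoids the square root and produces the sharper Lipschitz bound $(1+\sqrt{2})r/C$, so you only use a fraction of the available Lebesgue number. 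Both constructions are standard; yours is shorter and loses less, while the paper's has the minor advantage that the intermediate functions $\psi_i$ are already bounded, which can be convenient in other settings.
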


\begin{proof}
For $i=1,2$, set 
$$
V_i^{(C)}:=\{x\in V_i~|~d(x,V\setminus V_i)\geq C\}
$$
to be the `$C$-interior' of $V_i$, and note that the assumption on the Lebesgue number implies that $\{V_1^{(C)},V_2^{(C)}\}$ is a cover of $V$.  For $i=1,2$, define 
$$
\psi_i:V\to [0,1],~~~x\mapsto \max\Big\{1-\frac{d(x,V_i^{(C)})}{C},0\Big\},
$$
and note that each $\psi_i$ is $1/C$-Lipschitz.  As $\{V_1^{(C)},V_2^{(C)}\}$ is a cover of $V$, we have $1\leq \psi_1(x)+\psi_2(x)\leq 2$ for all $x\in V$ and so we may define
$$
\phi_i:V\to [0,1],~~~x\mapsto\sqrt{\frac{\psi_i(x)}{\psi_1(x)+\psi_2(x)}}.
$$
Clearly $\{\phi_1,\phi_2\}$ has the properties 1 and 2 from the statement, so it remains to check property 3.  Assume that $d(x,y)\leq r$.  For notational simplicity, assume that $i=1$.  Using that $\psi_1(x)+\psi_2(x)\geq 1$ and similarly for $y$ we have
\begin{align*}
\Big|\frac{\psi_1(x)}{\psi_1(x) +\psi_2(x)}&-\frac{\psi_1(y)}{\psi_1(y)+\psi_2(y)}\Big| \\& = \Big|\frac{\psi_1(x)(\psi_2(y)-\psi_1(y))+\psi_1(y)(\psi_1(x)-\psi_2(x))}{(\psi_1(x)+\psi_2(x))(\psi_1(y)+\psi_2(y))}\Big| \\
& \leq |\psi_1(x)||\psi_2(y)-\psi_1(y)|+|\psi_1(y)||\psi_1(x)-\psi_2(x)| \\
& \leq 2r/C,
\end{align*}
where the last line uses that $\psi_1$ is $1/C$-Lipschitz.  
Continuing, we have then that
\begin{align*}
|\phi_1(x)-\phi_1(y)| =&\Bigg|\sqrt{\frac{\psi_1(x)}{\psi_1(x)+\psi_2(x)}}-\sqrt{\frac{\psi_1(y)}{\psi_1(y)+\psi_2(y)}}\Bigg| \\
\leq& \sqrt{\Bigg|\frac{\psi_1(x)}{\psi_1(x)+\psi_2(x)}-\frac{\psi_1(y)}{\psi_1(y)+\psi_2(y)}\Bigg|} \\
&\leq \sqrt{2r/C}.
\end{align*}
The choice of $C$ implies this is less than $\delta$, so we are done.
\end{proof}

\begin{proposition}\label{union}
Let $c$ be as in Lemma \ref{parfce}, and $N$ satisfy assumption \eqref{Ndef} from Definition \ref{numbers}.  Then there exists a decay family $\Gamma=\{\gamma_{r,\delta}\}$ in the sense of Definition \ref{qfce2} such that for any $n\geq N$, any graph in $\F$ is in the class $k(\Gamma,c\log_d(n))$.
\end{proposition}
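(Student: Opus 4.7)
The approach is to glue positive-type kernels on $V_1$ and $V_2$ via a partition of unity subordinate to the cover $\{V_1, V_2\}$, whose Lebesgue number is at least $t(n)/2 \geq (c_1/2)\log_d(n)$ by part \ref{gtleb} of Proposition \ref{geomthe}. First I would combine Lemma \ref{parfce} with Lemma \ref{quantrel}(i) to extract a single decay family $\Gamma_0 = \{\gamma^0_{r,\delta}\}$ depending only on $d$, $\epsilon$, $M$, such that for every $n \geq N$ and every $G \in \F$ the set $V_1$ has property $k(\Gamma_0, \infty)$ and $V_2$ has property $k(\Gamma_0, c\log_d(n))$.

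For the main construction, fix $(r, \delta) \in [0, \infty) \times (0, 1]$. For $n$ large enough that $(c_1/2)\log_d(n) > 36r/\delta^2$, select positive-type kernels $k_1$ on $V_1$ and $k_2$ on $V_2$ witnessing the above property for the parameters $(r, \delta/3)$, and apply Lemma \ref{poulem} with the same $(r, \delta/3)$ to produce $\phi_1, \phi_2 : V \to [0,1]$ with $\phi_i$ supported in $V_i$, $\phi_1^2 + \phi_2^2 \equiv 1$, and $|\phi_i(x) - \phi_i(y)| < \delta/3$ whenever $d_G(x, y) \leq r$. Set
\[
k(x, y) := \phi_1(x)\phi_1(y) k_1(x, y) + \phi_2(x)\phi_2(y) k_2(x, y),
\]
with the convention that $\phi_i(x) k_i(x, y) = 0$ whenever $\phi_i(x) = 0$, so that $k_i$ need not be defined outside $V_i$. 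The four conditions of Definition \ref{qfce2} should then follow: $k(x, x) = 1$ from $\phi_1^2 + \phi_2^2 = 1$; the identity
\[
1 - k(x, y) = \sum_i \phi_i(x)\bigl(\phi_i(x) - \phi_i(y)\bigr) + \sum_i \phi_i(x)\phi_i(y)\bigl(1 - k_i(x, y)\bigr)
\]
combined with Cauchy--Schwarz ($\sum_i \phi_i(x)\phi_i(y) \leq 1$) gives $1 - k(x, y) < 2\delta/3 + \delta/3 = \delta$ whenever $d_G(x, y) \leq r$; the trivial bound $k \leq 2\, \gamma^0_{r, \delta/3}\circ d_G$ supplies decay; and for positive-typeness on $E \subseteq V$ with $\mathrm{diam}(E) \leq c\log_d(n)$, rewriting $\sum_{i,j} z_i \overline{z_j} k(x_i, x_j)$ via $w_i^{(\ell)} := z_i \phi_\ell(x_i)$ splits it into two non-negative sums, using positive-typeness of $k_1$ on the subset $\{x_i : \phi_1(x_i) \neq 0\} \subseteq V_1$ (unconstrained in diameter) and of $k_2$ on $\{x_i : \phi_2(x_i) \neq 0\} \subseteq E \cap V_2$ (of diameter at most $c\log_d(n)$).

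For the finitely many remaining $n$ (those for which the Lebesgue-number inequality above fails), the graphs in $\F$ have diameter at most a constant $D(r, \delta)$ depending only on $r$, $\delta$, $M$; on such graphs I would simply take the constant kernel $k \equiv 1$, which is trivially positive-type with $1 - k \equiv 0$, and then define $\gamma_{r, \delta}$ to be $1$ on $[0, D(r, \delta)]$ and equal to $2\gamma^0_{r, \delta/3}$ thereafter (enlarging $D$ if necessary so that $2\gamma^0_{r, \delta/3}(D) \leq 1$ to preserve monotonicity). This is still a bounded non-increasing function tending to zero at infinity, hence a valid decay function depending only on $d$, $\epsilon$, $M$, $r$, $\delta$. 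I expect the main technical friction to be the parameter bookkeeping, in particular the three-way split of the allowed error $\delta$ into $\delta/3$ for each partition-of-unity variation term plus $\delta/3$ for the kernel-variation term, together with the zero-convention $\phi_i(x) k_i(x, y) = 0$ off $V_i$ that makes the positive-type calculation split cleanly along the two pieces of the cover; this is a variant of the Dadarlat--Guentner union argument from \cite{Dadarlat:2007qy} referenced in the section introduction.
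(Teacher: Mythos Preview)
Your proposal is correct and follows essentially the same approach as the paper: pass from the negative-type data of Lemma \ref{parfce} to positive-type kernels via Lemma \ref{quantrel}(i), glue them with a partition of unity from Lemma \ref{poulem}, and handle the finitely many small $n$ with the constant kernel $1$. The only differences are bookkeeping: the paper splits the error as $2\cdot(\delta/6)$ for the partition-of-unity variation plus $2\cdot(\delta/3)$ for the kernel variation (bounding each $\phi_i(x)\phi_i(y)\leq 1$ separately), whereas you use $\delta/3$ throughout and invoke Cauchy--Schwarz for $\sum_i\phi_i(x)\phi_i(y)\leq 1$; both give the same total. One minor imprecision: your diameter bound $D(r,\delta)$ for the small-$n$ case also depends on $\epsilon$ (through $c_1$), but since $d,\epsilon,M$ are fixed this is harmless.
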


\begin{proof}
Use notation as in Proposition \ref{geomthe}.  Fix $r,\delta$, and note that Corollary \ref{parfce} and Proposition \ref{quantrel} imply that there exists a non-increasing function $\gamma:[0,\infty)\to [0,\infty)$ which tends to zero at infinity and for $i=1,2$ a kernel 
$$
k_i:V_i\times V_i\to [0,1]
$$
such that:
\begin{enumerate}
\item for all $x,y\in V_i$, $k_i(x,y)=k_i(y,x)$ and $k_i(x,x)=1$;
\item for all $x,y\in V_i$ with $d_G(x,y)\leq r$,
\begin{equation}\label{kerbound}
1-k_i(x,y)< \delta/3;
\end{equation}
\item for all $x,y\in V_i$,
$$
k_i(x,y)\leq \gamma(d_G(x,y));
$$
\item for any subset $E$ of $V_i$ of diameter at most $c\log_d(n)$ the restriction of $k_i$ to $E\times E$ is positive type.
\end{enumerate}
It suffices to show that for this fixed $(r,\delta)$ there exists a non-increasing bounded function $\gamma_{r,\delta}:[0,\infty)\to [0,\infty)$ and a kernel $k:V\times V\to [0,1]$ satisfying 
\begin{enumerate}
\item for all $x,y\in V$, $k(x,y)=k(y,x)$ and $k(x,x)=1$;
\item for all $x,y\in V$ with $d_G(x,y)\leq r$,
$$
1-k(x,y)< \delta;
$$
\item for all $x,y\in V$,
$$
k(x,y)\leq \gamma_{r,\delta}(d_G(x,y));
$$
\item for any subset $E$ of $V_i$ of diameter at most $c\log_d(n)$ the restriction of $k$ to $E\times E$ is positive type.
\end{enumerate}

If $n$ is such that $c\log_d(n)\leq 144r/\delta^2$, define $k:V\times V\to[0,1]$ to be the constant function with value $1$; this clearly has the right properties as long as we take $\gamma_{r,\delta}$ to be no smaller than the characteristic function of the bounded set
\begin{equation}\label{conset}
[0,1]\cup \{s\geq 1~|~c\log_d(s)\leq 144r/\delta^2\}.
\end{equation}

Otherwise, the cover $\{V_1,V_2\}$ of $V$ has Lebesgue number at least $144r/\delta^2$, and we may apply Lemma \ref{poulem} to find a partition of unity $\{\phi_1,\phi_2\}$ as in that lemma, and in particular satisfying 
\begin{equation}\label{pou}
\sup\{|\phi_i(x)-\phi_i(y)|~|~d(x,y)\leq r\}\leq \delta/6
\end{equation}
for $i=1,2$.  

Define $k:V\times V\to[0,1]$ by 
$$
k(x,y)=\phi_1(x)k_1(x,y)\phi_1(y)+\phi_2(x)k_2(x,y)\phi_2(y).
$$
We claim that this has the right properties.  Conditions 1, 3 and 4 from the list above follow from direct checks (for condition 3, any $\gamma_{r,\delta}$ larger that $2\gamma$ will work); it remains to check condition 2.  For this, let $x,y\in V$ satisfy $d(x,y)\leq r$.  Then 
\begin{align*}
1-k(x,y) & =\phi_1(x)^2+\phi_2(x)^2-\phi_1(x)k_1(x,y)\phi_1(y)-\phi_2(x)k_2(x,y)\phi_2(y) \\
&= \phi_1(x)\phi_1(y)+\phi_1(x)(\phi_1(x)-\phi_1(y))\\ 
&~~~~~+\phi_2(x)\phi_2(y)+\phi_2(x)(\phi_2(x)-\phi_2(y)) \\
&~~~~~-\phi_1(x)k_1(x,y)\phi_1(y)-\phi_2(x)k_2(x,y)\phi_2(y)  \\
& <\phi_1(x)\phi_1(y)(1-k_1(x,y))+\phi_2(x)\phi_2(y)(1-k_2(x,y))+2\delta/6,
\end{align*}
where the last inequality uses line \eqref{pou}.  On the other hand, as $\phi_i$ is supported in $V_i$ for $i=1,2$, line \eqref{kerbound} above implies that for $i=1,2$
$$
|\phi_i(x)\phi_i(y)(1-k_i(x,y))|<1\cdot 1\cdot (\delta/3),
$$
which concludes the argument.
\end{proof}

\begin{remark}\label{unionrem}
The above proof is inspired by the argument of Dadarlat-Guentner \cite[Section 3]{Dadarlat:2007qy} showing that a union of two metric spaces that coarsely embed into Hilbert space coarsely embeds into Hilbert space.  We do not know if asymptotic embeddability into Hilbert space is preserved under a union of two subspaces: it is important in the above that the Lebesgue numbers of the covers we use get larger as $n$ increases.
\end{remark}

\begin{proof}[Proof of Theorem \ref{fasemthe}]
Let $N$ be as in assumption \eqref{Ndef} in Definition \ref{numbers}.  Proposition \ref{union} implies that there is a decay family $\Gamma$ and $c>0$ such that for $n$ in the cofinite subset 
\begin{equation}\label{gset}
A:=\{n\in \N~|~|G_n|\geq N \text{ and } G_n\in \mathcal{F}(d,\epsilon,M,|G_n|)\}
\end{equation}
of the natural numbers, we have that the (metric space associated to) $G_n$ is in $k(\Gamma,c\log_d(|G_n|))$.  For  each $n$ not in $A$, the kernel $k:V_n\times V_n\to[0,1]$ that is constantly equal to one shows that $(V_n,d_{G_n})$ is in $k(\Gamma,c\log_d(|G_n|))$ as long as we alter the family $\Gamma$ so that all the decay functions in $\Gamma$ are at least one on the finite interval
$$
\big[0,\max\{\text{diam}(G_n)~|~n\in A\}\big],
$$
where $A$ is as in line \eqref{gset}.  Lemma \ref{quantrel} now shows that there exists a pair of decay functions $(\rho_-,\rho_+)$ such that each $G_n$ is in $K(\rho_-,\rho_+,c\log_d(n))$ and we are done.
\end{proof}

\section{Some probabilistic graph theory}\label{probsec}

In this section, we combine Theorem \ref{fasemthe} with some probabilistic graph theory to prove Theorem \ref{random}.  This material follows \cite[Section 7]{Mendel:2013uq}, up to some minor alterations.

Recall from Definition \ref{cycles} above that if $G$ is a graph, then
$$
\mathfrak{C}_t(G):=\{C\subseteq V ~|~C \text{ is a cycle and } |C|<t\}.
$$
Recall also from the introduction that `$A\lesssim_{d,\epsilon}B$' (for example) means that there is a constant $c=c(\epsilon,d)>0$ depending only on $\epsilon$ and $d$ such that $A\leq cB$.

The following lemma has essentially the same proof (but not exactly the same statement) as \cite[Lemma 7.4]{Mendel:2013uq}.

\begin{lemma}\label{numcycles}
Fix $\epsilon\in(0,1/5)$ and $d\geq 3$.  Then for any natural number $n\geq 1$, if $t$ is in $\N\cap[1,2\epsilon\log_d(n)]$ then
$$
\mathcal{G}_{n,d}(\{G\in \mathbb{G}_{n,d}~|~|\mathfrak{C}_t(G)|\geq n^{1-2\epsilon}\})\lesssim_{d,\epsilon} n^{5\epsilon-1}.
$$
\end{lemma}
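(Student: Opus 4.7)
The natural strategy is a first moment calculation followed by Markov's inequality. Writing $N_t(G) := |\mathfrak{C}_t(G)|$, I would first observe that
$$
\mathcal{G}_{n,d}\big(\{G\in\mathbb{G}_{n,d}~|~N_t(G)\geq n^{1-2\epsilon}\}\big)\leq \frac{\mathbb{E}[N_t]}{n^{1-2\epsilon}},
$$
so the entire task reduces to obtaining a bound of the form $\mathbb{E}[N_t]\lesssim_{d,\epsilon} n^{3\epsilon}$.

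The key ingredient is a classical estimate on the expected number of short cycles in a uniform random $d$-regular graph, essentially due to Bollob\'as. For each $3\leq \ell<t$, the expected number of $\ell$-cycles in $(\mathbb{G}_{n,d},\mathcal{G}_{n,d})$ satisfies
$$
\mathbb{E}[|\{C\in\mathfrak{C}_{\ell+1}(G)\setminus\mathfrak{C}_\ell(G)\}|]\lesssim_d \frac{(d-1)^\ell}{\ell},
$$
which is most cleanly proved via the configuration model: pass from $\mathcal{G}_{n,d}$ to the uniform measure on perfect matchings of $nd$ half-edges, count expected numbers of $\ell$-cycles there directly (each of the $\binom{n}{\ell}\ell!/(2\ell)$ possible cyclic vertex arrangements contributes a probability that behaves like $(d(d-1))^\ell/(nd)^\ell$), and translate back using that a uniform matching produces a simple graph with probability bounded below by a constant depending only on $d$. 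Summing over $\ell$ gives
$$
\mathbb{E}[N_t]\lesssim_d \sum_{\ell=3}^{t-1}\frac{(d-1)^\ell}{\ell}\lesssim_d \frac{(d-1)^t}{t}.
$$

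With the constraint $t\leq 2\epsilon\log_d(n)$, we have $(d-1)^t\leq d^t\leq n^{2\epsilon}$, so $\mathbb{E}[N_t]\lesssim_{d,\epsilon} n^{2\epsilon}$. Combining this with Markov's inequality gives
$$
\mathcal{G}_{n,d}\big(\{G~|~N_t(G)\geq n^{1-2\epsilon}\}\big)\lesssim_{d,\epsilon} \frac{n^{2\epsilon}}{n^{1-2\epsilon}}=n^{4\epsilon-1},
$$
which is even slightly stronger than the claimed $n^{5\epsilon-1}$; the slack presumably accounts for constants and log factors absorbed by the $\lesssim_{d,\epsilon}$ notation, or for a slightly less sharp bound on $\mathbb{E}[N_t]$ being used in the source.

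The main obstacle is purely bookkeeping: since $\mathcal{G}_{n,d}$ is the uniform distribution on a set with complicated dependencies (edge events are not independent), the expected-number-of-cycles computation cannot be done directly but must be routed through the configuration model, where the contiguity argument with the simple-graph event introduces a $d$-dependent constant. As this estimate is standard and is already recorded in \cite[Lemma~7.4]{Mendel:2013uq} in a closely related form, the cleanest presentation is simply to cite that lemma, combine it with Markov's inequality as above, and verify that the arithmetic of the exponents yields the claimed $n^{5\epsilon-1}$ bound for any $t\in\mathbb{N}\cap[1,2\epsilon\log_d(n)]$.
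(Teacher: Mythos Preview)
Your proposal is correct and uses essentially the same ingredients as the paper's proof: the expected-number-of-short-cycles estimate (which the paper cites from McKay rather than deriving via the configuration model, but this is the same fact) followed by Markov's inequality.

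The one organizational difference is that the paper applies Markov to each individual cycle-length count $X_r$ with threshold $n^{1-2\epsilon}/t$ and then takes a union bound over $r=3,\dots,t$, whereas you apply Markov once to the total $N_t=\sum_r X_r$. Your route is slightly cleaner: the paper's version picks up two factors of $t\lesssim_\epsilon\log_d(n)$ (one from the threshold $n^{1-2\epsilon}/t$, one from the union bound), arriving at $(\log_d n)^2 n^{4\epsilon-1}$, and then spends the extra $n^{\epsilon}$ precisely to absorb those logarithms. Your guess about the source of the slack is therefore exactly right.
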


\begin{proof}
For $r=3,...,t$, let $X_r(G)$ denote the number of cycles of length $r$ in $G$.  Then  \cite[Line (2.2)]{McKay:2004fk} implies that 
$$
\int_{\mathbb{G}_{n,d}} X_r(G) d\mathcal{G}_{n,d}=\frac{(d-1)^r}{2r}(1+O(r(r+d))/n)\lesssim_d (d-1)^r.
$$
Hence by Markov's inequality, for all such $r$ we have
\begin{align*}
\mathcal{G}_{n,d}\Big(\Big\{G\in \mathbb{G}_{n,d}~\Big|~X_r(G)\geq \frac{n^{1-2\epsilon}}{t}\Big\}\Big) \lesssim_d \frac{t}{n^{1-2\epsilon}}(d-1)^r\leq tn^{2\epsilon-1}d^t.
\end{align*}
Hence as $t\leq 2\epsilon\log_d(n)$,
$$
\mathcal{G}_{n,d}\Big(\Big\{G\in \mathbb{G}_{n,d}~\Big|~X_r(G)\geq \frac{n^{1-2\epsilon}}{t}\Big\}\Big)\lesssim_{d,\epsilon} \log_d(n)n^{2\epsilon-1}n^{2\epsilon}=\log_d(n)n^{4\epsilon-1}.
$$
Using this to estimate the desired probability gives
\begin{align*}
\mathcal{G}_{n,d}(\{G\in \mathbb{G}_{n,d}~|~|\mathfrak{C}_t(G)|\geq n^{1-2\epsilon}\}) & =\mathcal{G}_{n,d}\Big(\Big\{G\in \mathbb{G}_{n,d}~\Big|~\sum_{r=3}^{ t-1}X_r(G)\geq n^{1-2\epsilon}\Big\}\Big) \\
& \leq \sum_{r=3}^{ t } \mathcal{G}_{n,d}\Big(\Big\{G\in \mathbb{G}_{n,d}~\Big|~X_r(G)\geq \frac{n^{1-2\epsilon}}{t}\Big\}\Big)  \\
& \lesssim_{d,\epsilon} \sum_{r=3}^{ t} \log_d(n)n^{4\epsilon-1} \\
& \lesssim_\epsilon (\log_d(n))^2 n^{4\epsilon-1}.
\end{align*}
Up to a constant that depends only on $\epsilon$, this is bounded above by $n^{5\epsilon-1}$, so we are done.
\end{proof}

Recall the class of graphs $\mathcal{S}_\epsilon$ from Definition \ref{seddef} above.  The following lemma is \cite[Lemma 7.3]{Mendel:2013uq}.

\begin{lemma}\label{sepsdelta}
For any $\epsilon\in (0,1)$ we have
$$
1-\mathcal{G}_{n,d}(\mathcal{S}_\epsilon)\lesssim_{\epsilon,d} n^{\epsilon-1}. \eqno\qed
$$
\end{lemma}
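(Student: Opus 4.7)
The strategy is the standard first-moment (union bound) argument, following \cite[Lemma 7.3]{Mendel:2013uq}. Set $\delta := 7/(\epsilon \log_d n)$, so that $G \in \mathcal{S}_\epsilon^c$ iff there is a subset $S \subseteq V$ with $|S| = s \leq n^{1-\epsilon}$ satisfying $|E_G(S)| \geq k := \lceil (1+\delta)s\rceil$. I would begin by applying a union bound over the size $s$, over the choice of $S \subseteq V$ of that size, and over the choice of which $k$ specific edges lie inside $S$, yielding
$$1 - \mathcal{G}_{n,d}(\mathcal{S}_\epsilon) \leq \sum_{s=1}^{\lfloor n^{1-\epsilon}\rfloor} \binom{n}{s}\binom{\binom{s}{2}}{k} \cdot \mathcal{G}_{n,d}\bigl(\{G : \text{a fixed } k\text{-edge set lies in } E(G)\}\bigr).$$

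Next, I would bound the inner probability using the configuration-model estimate pervasive in \cite{Mendel:2013uq} (cf.\ McKay): for $k$ not too large compared to $n$, the probability that a specific $k$-edge set lies in the random $d$-regular graph is $\lesssim_d (d/n)^k$. Substituting this and applying Stirling-type bounds $\binom{n}{s}\leq (en/s)^s$ and $\binom{s^2/2}{k} \leq (es^2/(2k))^k$, the $s$-th summand $P_s$ reduces (after rearranging the powers of $n$ and $s$) to an expression of the form
$$P_s \;\lesssim_d\; e^s \bigl(eCd/2\bigr)^{(1+\delta)s} (s/n)^{\delta s},$$
for a constant $C$ coming from the configuration-model estimate.

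The key observation is that the value $\delta = 7/(\epsilon \log_d n)$ is calibrated precisely so that, for $s \leq n^{1-\epsilon}$, one has
$$(s/n)^{\delta s} \leq n^{-\epsilon \delta s} = d^{-7s}.$$
Since $d \geq 3$, the factor $d^{-7s}$ dominates the other exponential-in-$s$ terms, yielding geometric decay $P_s \lesssim_{d,\epsilon} \rho^s$ with some $\rho = \rho(d,\epsilon) < 1$. The sum is therefore controlled by its leading term, which occurs at the smallest $s$ for which $k \leq \binom{s}{2}$ is feasible (namely $s = 4$). A direct computation at $s = 4$ gives $\binom{n}{4}\binom{6}{5}(Cd/n)^5 \lesssim_d 1/n$, which is bounded by $n^{\epsilon - 1}$ and supplies the announced estimate.

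The main obstacle is the careful bookkeeping of constants to verify that $\rho(d,\epsilon) < 1$ uniformly in $d \geq 3$ and $\epsilon \in (0,1)$, and to extract exactly the exponent $\epsilon - 1$ on $n$ from the leading term (rather than something stronger but dependent on the smallest nonempty $s$, or weaker through an inefficient union bound). A secondary technical point is justifying the $(Cd/n)^k$ bound on the configuration-model probability uniformly in $k$ up to $k \asymp n^{1-\epsilon}$, which requires some care since $k$ is not bounded.
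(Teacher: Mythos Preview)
The paper does not prove this lemma at all: the sentence preceding it reads ``The following lemma is \cite[Lemma 7.3]{Mendel:2013uq}'', and the $\qed$ box is placed directly after the displayed inequality. It is quoted from Mendel--Naor as a black box. Your sketch is therefore not competing with anything in the paper; it is (a reasonable outline of) the Mendel--Naor argument you yourself cite, and the first-moment/union-bound strategy you describe is the standard one.

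Two small comments on the sketch itself. First, your worry about ``extracting exactly the exponent $\epsilon-1$'' is unnecessary: as your own computation at $s=4$ shows, the leading term is of order $1/n$, which is stronger than $n^{\epsilon-1}$ for every $\epsilon\in(0,1)$; the statement in the paper (and in Mendel--Naor) is simply not sharp in this respect. Second, the claim that $\rho(d,\epsilon)<1$ uniformly requires $n$ large enough that $\delta=7/(\epsilon\log_d n)\leq 1$; for the finitely many smaller $n$ one just absorbs the trivial bound $1$ into the implied constant, which is why the constant is allowed to depend on $\epsilon$.
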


The final lemma we need can be extrapolated from \cite{Bollobas:1982uq} and \cite{Bollobas:1988kx}; in its current form, it comes from \cite[page 64]{Mendel:2013uq}.

\begin{lemma}\label{expdiam}
There exists a constant $M\in (1,\infty)$ with the following property.  Let $\mathcal{E}^1_M$ denote the class of graphs such that if $G=(V,E)$ is in $\mathcal{E}_1^M$ and has $n$ vertices then 
$$
\text{diam}(G)\leq M\log_d(n)
$$
and 
$$
\min_{S\subseteq V, 0<|S|\leq n/2}\frac{|\{\{x,y\}\in E~|~|\{x,y\}\cap S|=1\}|}{|S|}\geq \frac{1}{M}
$$
(in words, the numerator in the above is the number of edges between $S$ and $V\setminus S$).  Then 
$$
1-\mathcal{G}_{n,d}(\mathcal{E}^1_M)\lesssim_d\frac{1}{n}. \eqno\qed
$$
\end{lemma}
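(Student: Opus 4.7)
The plan is to choose $M$ to satisfy two essentially independent probabilistic estimates for a uniform $d$-regular graph on $n$ vertices: an edge-expansion (Cheeger-type) lower bound and a logarithmic diameter upper bound. For both, the standard strategy is to pass to Bollob\'as' configuration (pairing) model, in which $dn$ half-edges are paired uniformly at random; conditioning on the resulting multigraph being simple recovers $\mathcal{G}_{n,d}$, and the probability of this conditioning event is bounded below by a constant depending only on $d$. Consequently, any upper bound of the form $O_d(1/n)$ in the pairing model transfers, with a possibly larger implied constant, to $\mathcal{G}_{n,d}$.

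For the edge-expansion part, I would carry out a first-moment calculation in the pairing model along the lines of \cite{Bollobas:1988kx}. For each $s\leq n/2$ and each integer $b<s/M$, one bounds the probability that some subset $S\subseteq V$ of size $s$ has exactly $b$ crossing edges by $\binom{n}{s}$ times a ratio of double-factorials counting the half-edge pairings that realize such a configuration. For $M$ large enough depending only on $d$, Stirling's formula turns this into a bound that decays geometrically in $s$, and the double sum over $1\leq s\leq n/2$ and over $b<s/M$ is $O_d(1/n)$.

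For the diameter part, I would run the tree-approximation argument from \cite{Bollobas:1982uq,Bollobas:1988kx}: reveal the pairing from a fixed vertex $v$ outward in breadth-first order, and show that while the revealed vertex set has size at most about $\sqrt{n}$, each newly exposed half-edge goes to a fresh vertex with probability $1-O(1/\sqrt{n})$, so that $|B(v,r)|\geq (d-1)^r/2$ for $r$ up to roughly $\tfrac12\log_{d-1}n$, with failure probability $O_d(1/n^2)$ uniformly in $v$. A sprinkling step then shows that once two such balls have combined size exceeding $n/2$, they are joined by at least one further pairing with overwhelming probability, giving a diameter bound of the form $M\log_d n$ off a set of measure $O_d(1/n)$. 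Finally, take $M$ to be the larger of the two constants produced above and apply a union bound to the two bad events. The main obstacle is simply aligning the constants from the two classical estimates so that one choice of $M$ governs both simultaneously and uniformly in $n$; this is exactly why invoking \cite{Bollobas:1982uq,Bollobas:1988kx} directly (rather than extracting weaker quantitative forms) is the cleanest way to finish.
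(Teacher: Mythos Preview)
The paper does not prove this lemma at all; the \qed{} at the end of the statement signals that it is simply recorded as a known fact, extracted from \cite{Bollobas:1982uq} and \cite{Bollobas:1988kx} via \cite[page~64]{Mendel:2013uq}. Your sketch is exactly the standard argument underlying those references---pass to the configuration model, run a first-moment union bound for the Cheeger constant, and a BFS tree-growth argument for the diameter---and is correct in outline, so you have supplied what the paper deliberately omits.
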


Finally in this section, we may collect this information together to prove Theorem \ref{random}.

\begin{corollary}\label{combined}
Let $M$ be as in Lemma \ref{expdiam}.  Let $\epsilon$ be any number in $(0,1/5)$ and $d\geq 3$ be an integer.  Let $N$ be as in assumption \eqref{Ndef} from Definition \ref{numbers} for this $d$, $\epsilon$, and $M$.   

Then for all $n\geq N$,
$$
1-\mathcal{G}_{n,d}(\F)\lesssim_{d,\epsilon}n^{5\epsilon-1}. 
$$
\end{corollary}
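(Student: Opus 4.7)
The plan is to decompose the event $\mathbb{G}_{n,d}\setminus \mathcal{F}(d,\epsilon,M,n)$ as a union of the four ``bad'' events corresponding to the failure of conditions \eqref{sepsass}--\eqref{expass} in Definition \ref{class}, and to bound each one by directly invoking the probability estimates from Lemmas \ref{sepsdelta}, \ref{numcycles}, and \ref{expdiam}.

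More precisely, the first step is to note that a graph $G\in\mathbb{G}_{n,d}$ fails to lie in $\mathcal{F}(d,\epsilon,M,n)$ exactly when at least one of the following holds: (A) $G\notin\mathcal{S}_\epsilon$; (B) $\mathrm{diam}(G)>M\log_d(n)$; (C) $|\mathfrak{C}_{t(n)}(G)|>n^{1-2\epsilon}$; or (D) some nonempty $S\subseteq V$ with $|S|\leq n/2$ has edge boundary smaller than $1/M$. By the union bound,
$$
1-\mathcal{G}_{n,d}(\mathcal{F}(d,\epsilon,M,n))\leq \mathcal{G}_{n,d}(\text{A})+\mathcal{G}_{n,d}(\text{B or D})+\mathcal{G}_{n,d}(\text{C}).
$$

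For term (A), Lemma \ref{sepsdelta} gives a bound of $\lesssim_{d,\epsilon}n^{\epsilon-1}$. For (B) and (D) combined, observe that the class $\mathcal{E}^1_M$ of Lemma \ref{expdiam} is exactly the class of graphs satisfying both of these conditions simultaneously, so Lemma \ref{expdiam} yields a bound of $\lesssim_d 1/n$. For (C), one needs to check the hypothesis of Lemma \ref{numcycles}, namely that $t(n)\in\N\cap[1,2\epsilon\log_d(n)]$; this is immediate from Definition \ref{numbers}, since $t(n)\leq c_2\log_d(n)$ with $c_2=\epsilon/25<2\epsilon$ (and $t(n)\geq 1$ follows from $n\geq N$ by the choice of $N$). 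Thus Lemma \ref{numcycles} yields a bound of $\lesssim_{d,\epsilon}n^{5\epsilon-1}$.

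Summing the three contributions gives $\lesssim_{d,\epsilon} n^{\epsilon-1}+n^{-1}+n^{5\epsilon-1}$. Since $\epsilon\in(0,1/5)$, the exponent $5\epsilon-1$ dominates $\epsilon-1$ and $-1$, so all three terms are absorbed into $\lesssim_{d,\epsilon}n^{5\epsilon-1}$. There is essentially no obstacle here: the heavy lifting has already been done in the cited lemmas from \cite{Mendel:2013uq} and \cite{Bollobas:1982uq,Bollobas:1988kx}, and the only mild care required is the verification that the definition of $t(n)$ in Definition \ref{numbers} fits the hypothesis of Lemma \ref{numcycles}.
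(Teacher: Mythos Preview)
Your proposal is correct and is essentially the same argument as the paper's: the paper observes that $\mathcal{F}(d,\epsilon,M,n)$ contains the intersection $\{G:|\mathfrak{C}_{t(n)}(G)|<n^{1-2\epsilon}\}\cap\mathcal{S}_\epsilon\cap\mathcal{E}^1_M$ and then applies the union bound via Lemmas \ref{numcycles}, \ref{sepsdelta}, and \ref{expdiam}, which is exactly your decomposition into bad events (A), (B)--(D), (C). Your explicit verification that $t(n)\leq c_2\log_d(n)<2\epsilon\log_d(n)$ fits the hypothesis of Lemma \ref{numcycles} is a detail the paper leaves implicit.
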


\begin{proof}
For each $n\geq N$, let $t(n)$ be as in assumption \eqref{tass} from Definition \ref{numbers}.  Then the class $\F$ contains 
$$
\{G\in \mathbb{G}_{n,d}~|~|\mathfrak{C}_{t(n)}(G)|< n^{1-2\epsilon}\}\cap \mathcal{S}_\epsilon \cap \mathcal{E}_1^M,
$$
so this follows from Lemmas \ref{numcycles}, \ref{sepsdelta} and \ref{expdiam} above.
\end{proof}

Finally, we are ready to prove Theorem \ref{random}.

\begin{proof}[Proof of Theorem \ref{random}]
Fix $d\geq 3$.  Let $\alpha=(a_n)_{n=1}^\infty$ be a sequence of natural numbers, and assume that $r>0$ is such that 
$$
\sum_{n=1}^\infty a_n^{-1+r}
$$
is finite.  Let $\epsilon$ be a number in $(0,1/5)$ such that $5\epsilon<r$, so in particular the sum
\begin{equation}\label{converge}
\sum_{n=1}^\infty a_n^{-1+5\epsilon}
\end{equation}
converges. Let $M$ be any positive number as in the statement of Lemma \ref{expdiam}.

For each $N\in \N$ satisfying assumption \eqref{Ndef} from Definition \ref{numbers} for $\epsilon$, $d$, and $M$ as above, let $\mathcal{F}_N$ denote the (measurable) subset of $\mathbb{G}_{\alpha,d}$ consisting of sequences $(G_n)$ such that $G_n$ is in $\mathcal{F}(d,\epsilon,M,a_n)$ for each $n\geq N$.  

Using Theorem \ref{fasemthe} it suffices to show that for any $\delta>0$ there exists $N$ satisfying the assumption in condition \eqref{Ndef} from Definition \ref{numbers} such that $\mathcal{G}_d(\mathcal{F}_N)>1-\delta$.  Using Corollary \ref{combined}, we have that there exists a constant $c_0>0$ (depending only on $d$ and $\epsilon$) such that for all $N$ suitably large
$$
\mathcal{G}_d(\mathcal{F}_N)\geq \prod_{n\geq N}(1-c_0a_n^{5\epsilon-1}).
$$ 
Using the assumption that the sum in line \eqref{converge} is finite, the right hand side above tends to one as $N$ tends to infinity, which completes the proof.
\end{proof}

\section{Main results}\label{mainsec}

In this section, we will prove Theorems \ref{cbc} and \ref{geot} from the introduction.   We first show that asymptotic embeddability implies that the coarse groupoid of a coarse model space has the \emph{boundary Haagerup property}; this allows us to relate asymptotic embeddability to $K$-theoretic results in the literature.  Indeed, having shows this, Theorem \ref{geot} follows from results of \cite[Section 8]{Willett:2013cr}, while Theorem \ref{cbc} follows from results of Finn-Sell and Wright \cite{Finn-Sell:2012fk} and others.

\begin{definition}\label{bhaag}
Let $X$ be a coarse model space.  Let $\beta X$ be the Stone-\v{C}ech compactification of $X$, where $X$ is considered as a discrete topological space.  We will identify points $\omega$ in $\beta X$ with ultrafilters on $X$.  Let $\partial X:=\beta X\setminus X$ be the associated Stone-\v{C}ech corona.   For any $r>0$, define
$$
E_r:=\{(x,y)\in X\times X~|~d(x,y)\leq r\},
$$
and let $\overline{E_r}$ denote the closure of $E_r$ in the compact topological space $\beta X\times \beta X$.  Let $F_r$ denote the intersection $F_r:=\overline{E_r}\cap (\partial X\times \partial X)$, which is a compact subset of $\partial X\times \partial X$.  Define 
$$
G_\infty(X):=\cup_{r\in \N}F_r,
$$
to be the union of the $F_r$ equipped with the weak topology\footnote{This is not the same as the topology that it inherits as a subspace of $\partial X\times \partial X$.} defined by stipulating that a subset $U$ of $G_\infty(X)$ is open if and only if $U\cap F_r$ is open for all $r\in \N$.  Equipped with this topology, $G_\infty(X)$ is a locally compact, $\sigma$-compact, Hausdorff topological space.

The coarse model space $X$ has the \emph{boundary Haagerup property} if there exists a continuous proper function $h:G_\infty(X)\to\R_+$ with the following properties.
\begin{enumerate}
\item For all $\omega\in \partial X$, $h(\omega, \omega)=0$.
\item For all $(\omega,\eta)\in G_\infty(X)$, $h(\omega,\eta)=h(\eta,\omega)$. 
\item For any finite subset $\{\omega_1,...,\omega_m\}$ of $\partial X$ such that $(\omega_i,\omega_j)$ is in $G_\infty(X)$ for all $i,j$, and any finite subset $\{z_1,...,z_m\}$ of $\C$ such that $\sum_{i=1}^m z_i=0$ we have
$$
\sum_{i,j=1}^m z_i\overline{z_j}h(\omega_i,\omega_j)\leq 0.
$$
\end{enumerate}
\end{definition}

\begin{remark}\label{groupoid}
As a topological space $G_\infty(X)$ identifies canonically with the restriction of the \emph{coarse groupoid} of $X$ (see \cite{Skandalis:2002ng} and also \cite[Chapter 10]{Roe:2003rw}) to the subset $\partial X$ of its unit space.  The groupoid operation on $G_\infty(X)$ is the restriction of the pair groupoid operation from $\partial X\times \partial X$.  It is then a check of definitions to see that $X$ has the boundary Haagerup property in the sense above if and only if the restriction of the coarse groupoid to its boundary has the Haagerup property in the sense of groupoid theory; vis, it admits a locally proper, continuous, negative type function (compare \cite[Section 5.2]{Skandalis:2002ng}).  
\end{remark}

\begin{lemma}\label{asembh}
Let $(G_n)$ be a sequence of graphs that asymptotically embeds into Hilbert space, and let $X$ be the associated coarse model space.  Then $X$ has the boundary Haagerup property.
\end{lemma}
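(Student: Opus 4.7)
The plan is to build $h$ by extending a kernel on $X \times X$, obtained from the asymptotic embedding data, to the Stone-\v{C}ech corona. First, define $K : X \times X \to \R_+$ by $K(x,y) = K_n(x,y)$ when $x,y \in V_n$ and $K(x,y) = 0$ otherwise (the latter choice is irrelevant, as distinct $V_n$ are at infinite distance). For each $r$, $K|_{E_r}$ is bounded by $\rho_+(r)$. Using the standard fact that, for bounded geometry $X$, the closure $\overline{E_r} \subseteq \beta X \times \beta X$ is canonically identified with the Stone-\v{C}ech compactification of the discrete set $E_r$ (compare \cite[Chapter 10]{Roe:2003rw} or \cite{Skandalis:2002ng}), $K|_{E_r}$ extends uniquely to a continuous map $h_r : \overline{E_r} \to [0,\rho_+(r)]$. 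These extensions are compatible for varying $r$ by uniqueness, so they glue to a continuous function $h : G_\infty(X) \to \R_+$.

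The symmetry of $h$ and the identity $h(\omega,\omega) = 0$ are inherited from $K$ by approximating points on the diagonal of $F_0$ through the diagonal of $E_0$. For properness: given $(\omega,\eta) \in F_r$ with $h(\omega,\eta) \leq C$, any net $(x_\alpha,y_\alpha) \in E_r$ approaching $(\omega,\eta)$ satisfies $K(x_\alpha,y_\alpha) \to h(\omega,\eta)$, so eventually $\rho_-(d_X(x_\alpha,y_\alpha)) \leq K(x_\alpha,y_\alpha) \leq C+1$, placing $(\omega,\eta)$ in $F_{R'}$ for $R' = \rho_-^{-1}(C+1)$. Hence $h^{-1}([0,C])$ lies inside the compact set $F_{R'}$ and, being closed there, is itself compact.

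The crux is conditional negative type. Given $\omega_1,\ldots,\omega_m \in \partial X$ with $(\omega_i,\omega_j) \in F_r$ for a common $r$, and $z_i \in \C$ satisfying $\sum_i z_i = 0$, the task is to produce a net of tuples $(x_1^\alpha,\ldots,x_m^\alpha) \in X^m$ such that $x_i^\alpha \to \omega_i$ for each $i$, all $x_i^\alpha$ lie in a common $V_{n(\alpha)}$ with $n(\alpha) \to \infty$, and $\mathrm{diam}\{x_1^\alpha,\ldots,x_m^\alpha\} \leq 2r$. Granted such a net, since $R_n \to \infty$, eventually $2r \leq R_{n(\alpha)}$, so property (3) of Definition \ref{asym} forces $\sum_{i,j} z_i \overline{z_j} K_{n(\alpha)}(x_i^\alpha,x_j^\alpha) \leq 0$; the continuity of $h$ on $\overline{E_{2r}}$, together with $(x_i^\alpha,x_j^\alpha) \to (\omega_i,\omega_j)$, lets us pass to the limit to conclude $\sum_{i,j} z_i \overline{z_j} h(\omega_i,\omega_j) \leq 0$.

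Constructing this net is the main obstacle and uses bounded geometry crucially. Pick any net $x_\alpha \to \omega_1$ in $\beta X$; the ball $B(x_\alpha;r)$ has cardinality at most $N := 1 + d + d(d-1) + \cdots + d(d-1)^{r-1}$ and lies in the single $V_{n(\alpha)}$ containing $x_\alpha$. By pigeonhole and passage to a subnet, one may enumerate $B(x_\alpha;r) = \{y_1^\alpha,\ldots,y_k^\alpha\}$ consistently; by compactness of $\beta X$ and further subnet refinement, each $y_j^\alpha$ converges to some $\eta_j \in \beta X$. The identification $\overline{E_r} \cong \beta E_r$ together with bounded geometry forces the set $\{\eta \in \beta X : (\omega_1,\eta) \in \overline{E_r}\}$ to consist exactly of such limits $\eta_j$, so each $\omega_i$ coincides with some $\eta_{j(i)}$; set $x_i^\alpha := y_{j(i)}^\alpha$. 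The triangle inequality yields the diameter bound, and $n(\alpha) \to \infty$ follows because $\omega_1 \in \partial X$ is a free ultrafilter. This coherent-net construction, which forces a simultaneous approximation of all $\omega_i$ within a single bounded piece of some $V_n$, is precisely where uniform bounded geometry of the $G_n$ and the disjoint-union structure of $X$ are both essential.
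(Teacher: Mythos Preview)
Your proof is correct and follows the same construction as the paper: patch the $K_n$ into a kernel $K$ on $X\times X$, use boundedness on each $E_r$ together with the identification $\overline{E_r}\cong\beta E_r$ to extend to $G_\infty(X)$, and then verify the three properties. The paper's own proof stops after the construction and simply says the properties ``follow from the properties of the sequence $(K_n)$ and continuity arguments''; you have supplied those arguments explicitly, including properness (which the paper does not spell out) and a careful coherent-net construction for conditional negative type using the \'etale structure coming from bounded geometry. One small point: the claim that the limits $\eta_j$ exhaust the fiber $\{\eta:(\omega_1,\eta)\in\overline{E_r}\}$ deserves one more line---it follows from the local-homeomorphism property of the source map $\overline{E_r}\to\beta X$, which lets you lift the net $x_\alpha$ through local sections to hit every point of the fiber after a further subnet refinement. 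With that said, there is no gap and no difference in strategy from the paper.
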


\begin{proof}
Assume that $(G_n)$ is asymptotically embeddable in Hilbert space.  Write $V_n$ for the vertex set of $G_n$, so as a set $X=\sqcup_{n=1}^\infty V_n$.  Let $(K_n:V_n\times V_n\to\R_+)_{n=1}^\infty$, $\rho_-,\rho_+$ and $(R_n)$ satisfy the conditions in Definition \ref{asym}.  Define $K:X\times X\to\R_+$ by setting
$$
K(x,y)=\left\{\begin{array}{ll} K_n(x,y) & x,y\in V_n \text{ for some } n \\ 0 & \text{otherwise}\end{array}\right..
$$
Note that for each $r>0$, $K$ is bounded by $\rho_+(r)$ on $E_r$.  It thus extends continuously to the closure of $E_r$ in the Stone-\v{C}ech compactification $\beta (X\times X)$ of $X\times X$ by the universal property of Stone-\v{C}ech compactifications.  However, this closure identifies canonically with the closure $\overline{E_r}$ of $E_r$ in $\beta X\times\beta X$ by \cite[Lemma 10.18]{Roe:2003rw}.  

Hence for each $r>0$, $K$ gives rise to a function $h_r:F_r\to \R_+$ by extension and restriction.  Clearly these extensions agree for different $r$, so by definition of the topology on $G_\infty(X)$ they patch together to define a continuous function
$$
h:G_\infty(X)\to\R_+.
$$
The fact that $h$ has the properties required by Definition \ref{bhaag} follows from the properties of the sequence $(K_n)$ and continuity arguments.  
\end{proof}

We are now finally ready to prove Theorems \ref{geot} and \ref{cbc}.

\begin{proof}[Proof of Theorem \ref{geot}]
Theorem \ref{random} implies that the collection of sequences in $\mathbb{G}_{\alpha,d}$ that have a subsequence that asymptotically embeds into Hilbert space contains a subset of measure one.  In particular, by Lemma \ref{asembh} the collection of sequences in $\mathbb{G}_{\alpha,d}$ that have a subsequence with the boundary Haagerup property contains a set of measure one.  It is not difficult to see that geometric property (T) passes to subsequences, and \cite[Theorem 8.2]{Willett:2013cr} shows that it is incompatible with the boundary Haagerup property as long as the sizes of the metric spaces involved tends to infinity.  
\end{proof}

\begin{proof}[Proof of Theorem \ref{cbc}]
Using Theorem \ref{random} and Lemma \ref{asembh}, it suffices to show that for any coarse model space $X$ with the boundary Haagerup property, and with underlying sequence of graphs being an expander, the following hold.
\begin{enumerate}
\item The coarse Baum-Connes assembly map for $X$ is injective.
\item The coarse Baum-Connes assembly map for $X$ is not surjective.
\item The maximal coarse Baum-Connes assembly map for $X$ is an isomorphism.
\end{enumerate}
Following Finn-Sell and Wright \cite{Finn-Sell:2012fk} (and based on earlier work of Skandalis, Tu and Yu \cite{Skandalis:2002ng}, and Higson, Lafforgue and Skandalis \cite{Higson:2002la}), there exist commutative diagrams of Baum-Connes assembly maps for groupoids \noindent(see \cite[Sections 2 and 4]{Finn-Sell:2012fk} and \cite[Section 6]{Higson:2002la} for more details).
$$
\xymatrix{ \ar[r] & K_*^{top}(X\times X,C_0(X,\mathcal{K})) \ar[r] \ar[d]^\mu & K_*^{top}(G(X),l^\infty(X,\mathcal{K}))  \ar[d]^\mu    \\
  \ar[r] & K_*(C_0(X,\mathcal{K})\rtimes_{-}(X\times X)) \ar[r]^-\iota & K_*(l^\infty(X,\mathcal{K})\rtimes_{-}G(X))  \\
 \ar[r]  & K_*^{top}(G_\infty(X),\frac{l^\infty(X,\mathcal{K})}{C_0(X,\mathcal{K})}) \ar[r] \ar[d]^\mu & \\
 \ar[r]^-\pi & K_*(\frac{l^\infty(X,\mathcal{K})}{C_0(X,\mathcal{K})}\rtimes_{-}G_\infty(X)) \ar[r] & }
$$
Here the `\_' on the bottom line could stand for either the maximal or reduced completions of the corresponding groupoid crossed products.  The left and right hand vertical maps are isomorphisms whether the maximal or reduced $C^*$-algebras are used: for the left map this is as the pair groupoid $X\times X$ is proper; for the right map, it follows from the Haagerup property for $G_\infty(X)$ (Remark \ref{groupoid}) and results of Tu \cite{Tu:1999bq}\footnote{See also \cite{Tu:2012ys} for some more details about applying the results of \cite{Tu:1999bq} in the non-second-countable case of current interest.}.  The middle vertical map identifies with the (maximal) coarse Baum-Connes assembly map for $X$ \cite[Section 4]{Skandalis:2002ng}.  The top row is automatically exact: this follows from properness of the spaces in the left variable of the $KK$ groups used, and exactness of $K$-theory.  

Now, if we are dealing with the maximal coarse Baum-Connes conjecture, then the bottom sequence is also automatically exact and the statement follows from the five lemma.  If we are dealing with the reduced version then the map labelled $\iota$ is injective and that labelled $\pi$ is surjective (both regardless of the dimension of the $K$-groups).  However, as $X$ is an expander there is a non-zero class $[p]\in K_0(C^*(G(X)))$ defined by a \emph{ghost projection} that is in the kernel of $\pi$ but not in the image of $\iota$ (see for example \cite[page 349]{Higson:2002la}).  The claimed statements follow from this and a diagram chase.
\end{proof}

We conclude the paper with two remarks on the proof.

\begin{remark}\label{fcerem}
Using techniques similar to those of Finn-Sell \cite{Finn-Sell:2013yq}, one can show that fibered coarse embeddability into Hilbert space in the sense of \cite{Chen:2012uq} for a coarse model space implies asymptotically embeddability into Hilbert space for the underlying sequence of graphs.  Thus one has the following implications for a coarse model space\footnote{The composition of the two implications is true for general bounded geometry metric spaces: this is one of the main results of the previously cited paper of Finn-Sell \cite{Finn-Sell:2013yq}}.
\begin{align*}
\text{fibered coarse embeddability} ~~ & \Rightarrow~~\text{asymptotic embeddability}\\&\Rightarrow~~\text{boundary Haagerup}
\end{align*}
It is quite plausible that these implications are all equivalences, but we do not know if any of the reverse implications hold; it would be interesting to know the answer to this.  We also do not know if Theorem \ref{random} holds with fibered coarse embeddability replacing asymptotic embeddability (we guess it does).  If it did, we could also have used the results of \cite{Chen:2012uq} to conclude the part of Theorem \ref{cbc} dealing with the maximal coarse Baum-Connes assembly map.
\end{remark}

\begin{remark}\label{altproof}
We sketch a proof of Theorem \ref{cbc} that does not appeal to the groupoid Baum-Connes conjecture.  The results of Proposition \ref{geomthe} can be interpreted as saying that up to coarse equivalence, a coarse model space $X$ (all but finitely many of) whose constituent graphs come from the families $\F$ splits into two subsets $A$ and $B$ with the following properties: the subset $A$ is a subspace of a coarse model space built from graphs with girth tending to infinity; the subset $B$ is a coarse model space with finite parts that coarsely embed into Hilbert space with uniform distortion; and the decomposition $X=A\cup B$ is coarsely excisive (`$\omega$ excisive' in the language of \cite[Definition 1.1]{Higson:1993th}).  Writing $KX_*(Y)$ for the coarse $K$-homology groups of a metric space $Y$, the Mayer-Vietoris sequence\footnote{The results of \cite{Higson:1993th} only apply for the usual Roe algebra, not the maximal version, but a Mayer-Vietoris sequence for the maximal version can be proved in the same way.} of \cite{Higson:1993th} gives rise to a  commutative diagram of coarse Baum-Connes assembly maps as below.
$$
\xymatrix{ \ar[r] & KX_i(A\cap B) \ar[r] \ar[d]^{\mu_{A\cap B}} & KX_i(A)\oplus KX_i(B) \ar[r]  \ar[d]^{\mu_A\oplus \mu_B}  &   \\
  \ar[r] & K_i(C_{-}^*(A\cap B)) \ar[r]& K_i(C^*_{-}(A))\oplus K_i(C^*_{-}(B)) \ar[r] & \\
 \ar[r]  & KX_{i-1}(X) \ar[r] \ar[d]^{\mu_X} & KX_{i-1}(A\cap B) \ar[d]^{\mu_{A\cap B}} \ar[r]&  \\
 \ar[r] & K_{i-1}(C^*_{-}(X)) \ar[r] & K_{i-1}(C^*_{-}(A\cap B)) \ar[r] & }
$$
Again, the `\_' on the bottom line could refer to either the maximal or usual completions of the Roe algebras involved.   The rows are exact, and the vertical maps are all coarse Baum-Connes assembly maps.  The maps $\mu_{A\cap B}$ and $\mu_B$ are isomorphisms as $B$ (whence also $A\cap B$) coarsely embeds into Hilbert space, and by appeal to the main results of \cite{Yu:200ve} in the reduced case, and either \cite{Yu:200ve} and \cite{Willett:2010ca} together, or \cite{Chen:2012uq}, in the maximal case.  In the maximal case, the main result of \cite{Willett:2010zh}\footnote{\label{fn}Technically, these results apply to sequences of graphs with large girth, not subspaces of such, but the same arguments apply.} or \cite{Chen:2012uq} shows that $\mu_A$ is also an isomorphism, and it follows that $\mu_X$ is an isomorphism from the five lemma.  In the reduced case, \cite[Section 7]{Willett:2010ud}$^{\ref{fn}}$ shows that $\mu_A$ is injective, and it follows that $\mu_X$ is injective from a diagram chase. 

To show that $\mu_X$ is not surjective in the reduced case this way would be a little more involved.  We guess that there is `enough expansion' in $A$ to appeal to the results of  \cite[Sections 5 and 6]{Willett:2010ud} to show that $\mu_A$ is not surjective, and thus that $\mu_X$ is not surjective either.  We did not check the details, however.

Having said this, we used the groupoid machinery for two reasons.  First, some additional work would be required (although one could avoid most of Section \ref{asemsec} in the current paper, so it is not clear more work would be required overall).  Second, various weak forms of coarse embeddability into Hilbert space have received some interest lately, so we thought it was worth showing how our results fit into that framework.
\end{remark}

\bibliography{Generalbib}

\end{document}